\title{Positivity of Nearly Linearly Recurrent Sequences}
\titlerunning{Positivity of Nearly Linearly Recurrent Sequences}
\author{Amaury Pouly}{CNRS, IRIF, Universit\'{e} Paris Cit\'{e}, Paris, France}{amaury.pouly@cnrs.fr}{https://orcid.org/0000-0002-2549-951X}{}
\author{Mahsa Shirmohammadi}{CNRS, IRIF, Universit\'{e} Paris
  Cit\'{e}, Paris, France}{mahsa@irif.fr}
{https://orcid.org/0000-0002-7779-2339}{}
\author{James Worrell}{Department of Computer Science, University of Oxford, UK}{jbw@cs.ox.ac.uk}
{https://orcid.org/0000-0001-8151-2443}{}
\authorrunning{A. Pouly, M. Shirmohammadi, and J. Worrell}
 \keywords{Linear Time-Invariant Systems, Positivity Problem, Subspace Theorem, Transcendence}
\def\N{\mathbb{N}}
\begin{document}

\maketitle

\begin{abstract}
Nearly linear recurrences generalise linear recurrences and can be represented as special cases of both linear time-invariant systems in control theory and linear-constraint loops in program analysis. We formulate the Positivity Problem for such recurrences: given a recurrence and initial values, decide whether every sequence satisfying the recurrence is termwise nonnegative. This problem generalises Positivity for linear recurrence sequences and is a special case of halfspace non-reachability for linear time-invariant systems. Our main result is a decision procedure for order-2 recurrences. The termination of the procedure relies on a transcendence theorem of independent interest: we prove that certain convergent series obtained by summing the absolute values of terms of algebraic linear recurrence sequences are transcendental.
\end{abstract}

\section{Introduction}
Let $\boldsymbol u = (u_n)_{n=0}^\infty$ be a sequence of real algebraic numbers.
A \emph{nearly linear recurrence} of order $d$ is a system of inequalities 
\begin{gather}
     \varepsilon_0 \leq u_{n+d} - a_{d-1}u_{n+d-1}-\cdots - a_{0}u_n \leq \varepsilon_1 \qquad (n\in \mathbb N)\, ,
\label{eq:nearly}
\end{gather}
where $a_0,\ldots,a_{d-1}$ and $\varepsilon_0 \leq \varepsilon_1$ are real algebraic constants, with $a_0\neq 0$.
Nearly linear recurrences generalise the classical notion of linear recurrences:
if $\varepsilon_0=\varepsilon_1$, then~\eqref{eq:nearly} defines an inhomogeneous linear recurrence of order $d$, which can be transformed by a standard construction into a homogeneous linear recurrence of order at most $d+1$.
Carrying over standard terminology from linear recurrences,
we define the \emph{characteristic polynomial} of the recurrence~\eqref{eq:nearly} to be
$f(x):=x^d - a_{d-1}x^{d-1} - \cdots - a_0$ and the \emph{characteristic roots} of the recurrence
to be the roots of $f(x)$.  A sequence $\boldsymbol u$ satisfying~\eqref{eq:nearly} is called a \emph{nearly linearly recurrent sequence (NLRS)}.  

The notion of NLRS was introduced by Akiyama, Evertse, and
Peth\H{o}~\cite{Akiyama2017}, motivated by the study of shift radix
numeration systems and discretised rotations in the
plane~\cite{Akiyama2013}.  NLRS are considered in~\cite{Akiyama2017}
as LRS subject to bounded disturbances, but they can also be seen as
special cases of linear time-invariant (LTI) systems, a fundamental
notion in control theory~\cite{FijalkowOPP019}.  An LTI system in
dimension $d$ is specified by a non-deterministic vector recurrence
$\boldsymbol x_{n+1} \in A\boldsymbol x_n + U$, where $A$ is a
$d\times d$ matrix and $U\subseteq \mathbb R^d$ is a polyhedral set.
LTI systems can further be generalised to linear constraint loops, as
studied in termination analysis~\cite{PodelskiR04}.  A linear
constraint loop in dimension $d$ is specified by a non-deterministic
vector recurrence
$A\boldsymbol{x}_{n+1}+B\boldsymbol{x}_n\leq \boldsymbol c$, given by
a system of linear inequalities.  In LTI systems, non-determinism is
used to model the actions of an external controller, while in linear
constraint loops, it is used to over-approximate programming
constructs, such as conditional branching or size abstractions of data
structures.  However, the non-determinism inherent in such systems
makes their algorithmic analysis (reachability, termination, \emph{etc}.) 
particularly challenging.

We study the Positivity Problem for NLRS, where the input is a
recurrence of the form~\eqref{eq:nearly} (specified by real algebraic
numbers $a_0,\ldots,a_{d-1}$, $\varepsilon_0$, and $\varepsilon_1$)
and real algebraic initial values $u_0,\ldots,u_{d-1}$, and the task is
to determine whether every sequence $\boldsymbol u$
satisfying~\eqref{eq:nearly} with the specified initial values
satisfies $u_n \geq 0$ for all $n\in\mathbb N$. (Here the convention is to call a 
sequence positive if its terms are all nonnegative.)
Evidently the
Positivity Problem for NLRS generalises the Positivity Problem for
linear recurrence sequences (LRS): \emph{given an LRS $\boldsymbol u$,
  determine whether $u_n\geq 0$ for all $n$}.
  \footnote{More precisely, the Positivity Problem for linear recurrences of order $d$ reduces to
  the Positivity Problem for NLRS of order $d-1$.  Indeed, a nonzero nonnegative LRS must have 
  a positive real dominant characteristic root $\rho$.  Scaling by $\rho^{-n}$ preserves signs and makes $1$
  a characteristic root. Factoring the resulting characteristic polynomial
  as $(x-1)Q(x)$ yields an inhomogeneous recurrence of order at most $d-1$, which is a special case 
  of an nearly linear recurrence of order $d-1$.}  The decidability of the latter
was posed as an open question some 50 years ago~\cite[Section
II.12]{SalomaaSoittola1978}, in the context of weighted automata and
formal power series.  The Positivity Problem for LRS arises in many
other settings, including program termination~\cite{OWSiglog05},
control theory~\cite{FijalkowOPP019}, and stochastic
processes~\cite{ChatterjeeD24,PiribauerB24}.  Decidability is known
for LRS of order at most 5, but remains open in
general~\cite{OuaknineW14}.  The above references show hardness of
various problems by reduction from the Positivity Problem
for LRS.  In the other direction, there is a straightforward reduction
of the Skolem Problem---\emph{determine whether a given LRS has a zero
  term}---to the Positivity Problem~\cite[Section 1]{HHH06}.  As shown in Section~\ref{sec:dec},
the Positivity Problem for NLRS of order $d$ is a special case of the
non-reachability problem for linear time-invariant systems in
dimension $d$.

The main result of this paper is a decision procedure for the
Positivity Problem for nearly linear recurrences of order 2.  
To prove our main result, we reduce the task of
determining the positivity of all NLRS defined by a given recurrence
and initial conditions to that of determining the sign of a certain
infinite sum.  The key technical contribution is to show that this sum
cannot be zero and hence that its sign can be determined by a numerical computation
of sufficiently high precision.  We show non-vanishing of the
sum by proving that it is transcendental (i.e., it is not a root of a
nonzero polynomial with integer coefficients).  The proof of the latter fact
is inspired by techniques 
from~\cite{LOW,LOW25} that use Schlickewei's $p$-adic Subspace Theorem in
Diophantine approximation~\cite{Schlickewei76}.

There are several other decision problems concerning the positivity of
NLRS that also specialise known problems on linear time-invariant
systems and linear constraint loops.  We review these problems in Section~\ref{sec:conclusion}.

\subsection{Related Work}
The paper~\cite{Akiyama2017} provides the first systematic study
of NLRS.  Among other things, this work characterised the possible
asymptotic behaviours of NLRS and showed that the analogue of the
Skolem-Mahler-Lech Theorem (the set of zeros of an LRS is the union of
a finite set and finitely many arithmetic progressions) fails for
NLRS.

A number of recent works consider decision problems for perturbed
versions of linear dynamical systems.  The paper~\cite{AkshayBGV24}
studies the decidability of robust versions of decision problems on
linear recurrence sequences.  That work considers only perturbations
of the initial values of the recurrence, whereas in the present paper
we consider perturbations of the recurrence at each time step.  The
paper~\cite{rounding} considers linear dynamical systems with rounding
after each transition step, in an attempt to study dynamics in the
context of bounded precision arithmetic.  Since the rounding
considered in~\cite{rounding} is deterministic, the transition
relation remains deterministic, unlike in the present paper.  The main
result of~\cite{rounding} is a decision procedure for a variant of
Kannan and Lipton's orbit problem in the presence of rounding for
hyperbolic linear dynamical systems.

There is a rich body of work on the notion of \emph{chain
  reachability} in dynamical systems~\cite[Chapters 6 and
7]{katok1995modern}.  This is a notion of reachability that is stable
under arbitrarily small perturbations of the standard dynamics.  The
paper~\cite{DCostaLNO021} shows the decidability of the natural
analogue of the Skolem Problem for LRS with respect to chain
reachability.  By contrast, the present work deals with LRS under a
fixed perturbation range---the interval $[\varepsilon_0,\varepsilon_1]$ in
Equation~\eqref{eq:nearly}.  The resulting notion of NLRS is a strict
generalisation of LRS, and from an algorithmic viewpoint this makes
the decision problems even more challenging than for LRS, whereas
decision problems associated with chain reachability appear to be more
tractable and are not generalisations of the classical Skolem and
Positivity Problems for LRS.

Our decision procedure for positivity of NLRS relies on Theorem~\ref{thm:main}, which shows transcendence of
the sum $\sum_{n=0}^\infty |a\lambda^n + \overline{a\lambda^n}|$, for $a$ and
$\lambda$ non-zero algebraic numbers such that $|\lambda|<1$ and
$\lambda/\overline{\lambda}$ is not a root of unity.
The special case of this result
in which $\arg(a)$ is a rational multiple of $\pi$ is a consequence of~\cite[Theorem 1(ii)]{LOW}.
The ideas behind Theorem~\ref{thm:main} are inspired by~\cite{LOW}, but the technical development 
is substantially different and allows us, under natural genericity assumptions, to characterise the  
algebraic or transcendental nature of $\sum_{n=0}^\infty |u_n|$ for $(u_n)_{n=0}^\infty$ a real algebraic LRS of arbitrary 
order that converges to zero.
When there is a single dominant root, the eventually periodic sign makes the sum algebraic, while 
Theorem~\ref{thm:extended} shows that when there is a pair of complex-conjugate dominant roots, and the characteristic roots are simple and multiplicatively independent, then the sum is transcendental.

The paper~\cite{BL23} contains related, but  seemingly incomparable, results on transcendence of Hecke-Mahler
series $\sum_{n=0}^\infty \lambda^n \lfloor n\theta + \psi \rfloor$ for algebraic $\lambda$, $0<|\lambda|<1$, and $\theta,\psi\in(0,1)$ with $\theta$ irrational.

\section{Decidability of Positivity at Order Two} 
\label{sec:dec}
In this section we give a decision procedure for the Positivity
Problem for order-2 NLRS.  The procedure depends on a
transcendence result, which is the subject of Sections~\ref{sec:partI}
and~\ref{sec:main}.
We assume that the constants that define
the nearly linear recurrence and the initial values
of the sequence are real algebraic.
We use standard algorithms for exact computation with real algebraic numbers, such as root isolation, arithmetic, comparison, and sign determination; see, e.g.,~\cite{basu2006algorithms}.
We start by recalling some basic facts about
linear recurrence sequences (see~\cite[Chapter 6, Sections 1 and 2]{berstel2011noncommutative} for details).
\subsection{Linear Recurrence Sequences}
\label{sec:LRS}
In this section we consider LRS of real algebraic numbers.  Recall
that every such LRS $\boldsymbol u$ has a closed-form representation
$u_n = \sum_{i=1}^s P_i(n) \lambda_i^n$, where
$\lambda_1,\ldots,\lambda_s$ are roots of the characteristic
polynomial and $P_1,\ldots,P_s$ are non-zero polynomials with
algebraic coefficients.  Conversely, every sequence $\boldsymbol u$
admitting such an exponential-polynomial representation is an LRS.  A
third equivalent characterisation of LRS is via matrix powers: given a
$d\times d$ matrix $A$ and $d$-dimensional vectors
$\boldsymbol x,\boldsymbol y$, with all entries real algebraic, the
sequence $u_n:= \boldsymbol x^\top A^n \boldsymbol y$ is an LRS whose
characteristic roots are all eigenvalues of $A$.  Moreover, if $A$ is
diagonalisable, then the sequence $\boldsymbol u$ admits an
exponential-sum representation $u_n=\sum_{i=1}^s a_i \lambda_i^n$ for
fixed algebraic numbers $a_1,\ldots,a_s$.  Recall also that for any
sequence $\boldsymbol u$ satisfying a linear recurrence with
characteristic polynomial $Q(x)$, the sequence of partial sums
$\boldsymbol v=(v_n)_{n=0}^\infty$, defined by
$v_n:=\sum_{k=0}^{n-1}u_k$, satisfies a linear recurrence with
characteristic polynomial $Q(x)(x-1)$.

Recall that the \emph{dominating characteristic root} of a linear recurrence is one whose absolute value 
is maximum among the absolute values of characteristic roots.
It is folklore that an LRS has ultimately constant sign if its minimum-order recurrence has a strictly 
dominating positive real characteristic root.  The latter property is moreover effective: for a given sequence
we can compute an index beyond which the sign is constant
(see~\cite[Section 4.1]{OuaknineW14}).  Let $\mathsf{PosPow}$ be the collection
of all LRS $\boldsymbol u$ such that there exists $L \in \mathbb Z_{>0}$ for which  $\lambda^L$ is positive real for every characteristic
root $\lambda$ of $\boldsymbol u$.
Such a sequence  $\boldsymbol u$ is the interleaving of  LRS
(namely the sequences
$(u_{Ln+\ell})_{n=0}^\infty$ for $\ell=0,\ldots,L-1$), all of which
have a dominating real positive characteristic root. 
It follows that the Positivity Problem is decidable for
LRS in $\mathsf{PosPow}$ and that
if $\boldsymbol u \in \mathsf{PosPow}$ then some 
infinite tail of the sequence $(|u_n|)_{n=0}^\infty$ of absolute 
values lies in $\mathsf{PosPow}$.  Furthermore,
by considering the exponential-polynomial representation of LRS, it is
easy to see that $\mathsf{PosPow}$ is closed under
linear combinations, Hadamard (pointwise) products, and partial sums.

We conclude the section with the following observation.  If
$\alpha,\mu \in \mathbb C$, $|\mu|=1$, and $\mu$ is not a
root of unity, then
\begin{gather}
\liminf_{n\rightarrow \infty} \alpha\mu^n + \overline{\alpha
\mu^n} = - 2|\alpha| \, .
  \label{eq:liminf}
\end{gather}
Equation~\eqref{eq:liminf} follows immediately from the fact that $\{
\mu^n : n\in \mathbb N\}$ is dense in the set $\{z \in \mathbb C:|z|=1\}$, which is a consequence
of Kronecker's Theorem on inhomogeneous Diophantine approximation.
\subsection{Matrix-Power Formulation of NLRS}
As a first step, we give a matrix-power formulation of the nearly
linear recurrence~\eqref{eq:nearly}.  This shows that the Positivity
Problem for NLRS is a special case of the non-reachability problem for
LTI systems.  To this end, for the companion matrix
\begin{gather}
A :=
\begin{bmatrix}
a_{d-1} & a_{d-2} & \cdots & a_1 & a_0 \\
1 & 0 & \cdots &0 & 0 \\
0 & 1 &  \cdots & 0& 0 \\
\vdots & \vdots & \ddots & \vdots& \vdots \\
0 &  0 & \cdots & 1& 0
\end{bmatrix}\, ,
\label{eq:MAT}
\end{gather}
we consider the vector recurrence 
\begin{gather}
    \boldsymbol x_{n+1} = A\boldsymbol  x_n+r_n \boldsymbol e_1 \quad (n\in \mathbb N,\, r_n \in [\varepsilon_0,\varepsilon_1]) \, ,
\label{eq:nearly-mat}
\end{gather}  
where $\boldsymbol e_1$ denotes the coordinate vector $(1,0,\ldots,0)^\top$.
Clearly, every sequence $(u_n)_{n=0}^\infty$ satisfying~\eqref{eq:nearly} has the form $u_n=\boldsymbol e_d^\top \boldsymbol x_n$ for 
a solution $(\boldsymbol x_n)_{n=0}^\infty$ of~\eqref{eq:nearly-mat} with initial vector $\boldsymbol x_0 = (u_{d-1},\ldots,u_0)^\top$.
Conversely, for every solution $(\boldsymbol x_n)_{n=0}^\infty$ of~\eqref{eq:nearly-mat}, the sequence 
$(u_n)_{n=0}^\infty$ defined by $u_n :=\boldsymbol e_d^\top \boldsymbol x_n$ satisfies~\eqref{eq:nearly}.
Systems of the form~\eqref{eq:nearly-mat}
are known in control theory as  (discrete-time) \emph{linear time-invariant (LTI)
  systems}.
  Borrowing terminology from this area we will call the
scalars $r_n$ the \emph{controls}.  

We can recast the Positivity Problem for NLRS in terms of vector
recurrences of the form~\eqref{eq:nearly-mat}: \emph{for a given
  instance of~\eqref{eq:nearly-mat} and initial vector
  $\boldsymbol x_0 \in (\overline{\mathbb Q}\cap\mathbb R)^d$, does every choice of 
  $r_0,r_1,r_2,\ldots \in [\varepsilon_0,\varepsilon_1]$ yield a solution
  of~\eqref{eq:nearly-mat} that remains in the halfspace
  $\{ \boldsymbol x \in \mathbb R^d : \boldsymbol e_d^\top \boldsymbol
  x \geq 0 \}$}?  The complement of this problem thus asks
whether there exist $n$ and controls
$r_0,\ldots,r_{n-1} \in [\varepsilon_0,\varepsilon_1]$ such that
$\boldsymbol e_d^\top \boldsymbol x_n < 0$.  This is an instance of
the reachability problem for LTI
systems~\cite{AsarinDG03,FijalkowOPP019}, in which the target is a
halfspace and the control set is a bounded line segment.

By unfolding the recurrence,
the solution of~\eqref{eq:nearly-mat} can be written in the form \[\boldsymbol x_n = A^n \boldsymbol x_0+\sum_{k=0}^{n-1} r_{n-1-k}A^k\boldsymbol e_1 \, .\]
We thus focus on the positivity of the following sequence over all $n\in\mathbb N$ and all
choices of $r_0,r_1,r_2,\ldots \in [\varepsilon_0,\varepsilon_1]$ 
of controls: 
\begin{gather}
  u_n =  \boldsymbol e_d^\top \boldsymbol x_n = \boldsymbol e_d^\top A^n
   \boldsymbol x_0 + \sum_{k=0}^{n-1} r_{n-1-k}
   \boldsymbol (e_d^\top A^k \boldsymbol e_1) \, .
\label{eq:main}
\end{gather} 

To further analyse~\eqref{eq:main}, we introduce two auxiliary linear
recurrence sequences, given by matrix-power expressions.
These are the \emph{zero-control LRS} $\boldsymbol
u^{(z)}=\big(u^{(z)}_n\big)_{n=0}^\infty$ and the 
\emph{control LRS}
$\boldsymbol u^{(c)} = (u^{(c)}_n)_{n=0}^\infty$, which are
respectively defined, for all $n \in \mathbb N$, by
\begin{gather}
u^{(z)}_n:=\boldsymbol e_d^\top A^n \boldsymbol x_0\qquad\text{and}\qquad
  u^{(c)}_n := \boldsymbol e_d^\top A^n \boldsymbol e_1\, .
  \label{eq:zero-control}
  \end{gather}
We can rewrite~\eqref{eq:main} in terms of the zero-control and control LRS as follows:
\begin{gather}
   u_n = \boldsymbol e_d^\top \boldsymbol x_n = u_n^{(z)} + \sum_{k=0}^{n-1}
   r_{n-1-k} u_k^{(c)} \qquad (r_0,r_1,r_2,\ldots \in [\varepsilon_0,\varepsilon_1])\, .
    \label{eq:main2}
\end{gather}

For a given $n\in\mathbb N$,
we can minimise the sum in~\eqref{eq:main2} by minimising each summand individually.  
 The choice of controls $r_0,\ldots,r_{n-1}\in
 [\varepsilon_0,\varepsilon_1]$ that achieves this is
\begin{gather}r_{n-1-k}:= \begin{cases} \varepsilon_1 & \text{if $u_k^{(c)}
      <0$}\\
    \varepsilon_0 &  \text{if $u_k^{(c)}
      \geq 0$} \end{cases} \qquad (k\in\{0,\ldots,n-1\}) \, .
\label{eq:opt}
\end{gather}
(The fact that we always choose an endpoint of the interval 
 $[\varepsilon_0,\varepsilon_1]$ as control in~\eqref{eq:opt} is an instance of the 
 so-called \emph{bang-bang principle} for reachability objectives, in which one always chooses an extreme 
 value of the control set).
Equation~\eqref{eq:opt} gives \[ r_{n-1-k} u^{(c)}_{k}=\left(\frac{\varepsilon_0+\varepsilon_1}{2} \right)u_k^{(c)} +
\left(\frac{\varepsilon_0-\varepsilon_1}{2}\right) |u_k^{(c)}| \, .\]
 Thus, for $n\in \mathbb N$, the minimum of~\eqref{eq:main2} over all possible choices 
of $r_0,\ldots,r_{n-1}\in [\varepsilon_0,\varepsilon_1]$ is given by the expression
\begin{gather}
u^{(min)}_n:= u_n^{(z)} + 
\sum_{k=0}^{n-1} \left(\left(\frac{\varepsilon_0+\varepsilon_1}{2} \right) u_k^{(c)} + \left(\frac{\varepsilon_0-\varepsilon_1}{2}
\right) |u_k^{(c)}| \right)\qquad (n\in\mathbb N) \, .
    \label{eq:master}
\end{gather}
Determining positivity of~\eqref{eq:nearly} thus reduces to showing positivity of $\boldsymbol u^{(min)}=(u^{(min)}_n)_{n=0}^\infty$.
Note that $\boldsymbol u^{(min)}$ is the pointwise minimum of all sequences satisfying the recurrence~\eqref{eq:nearly}, but does not typically itself satisfy~\eqref{eq:nearly}.
\subsection{Decision Procedure}
\label{sec:decide}

\begin{theorem}
    The Positivity Problem for NLRS is decidable for order-2 recurrences.
\end{theorem}
\begin{proof}
  We have observed that if $\varepsilon_0=\varepsilon_1$, then a
  sequence satisfying an order-$2$ nearly linear
  recurrence~\eqref{eq:nearly} is an LRS of order at most $3$.  Since the
  Positivity Problem is decidable for order-3 LRS by~\cite[Theorem
  5]{OuaknineW14}, we may henceforth assume that
  $\varepsilon_0<\varepsilon_1$.

  From the previous section,
  our task is to determine the positivity of the sequence
  $\boldsymbol u^{(min)}$ in~\eqref{eq:master} in the case that the
  matrix $A$ in~\eqref{eq:MAT} is $2 \times 2$.  
We divide the proof into four cases according to the spectrum of $A$.  

\emph{Case (i): Some power of $A$ has positive real spectrum.}
Suppose that $A^M$ has positive real spectrum for some $M\geq 1$.  
Then the control LRS $\boldsymbol u^{(c)}$ and zero-control LRS $\boldsymbol u^{(z)}$ both lie
in the class $\mathsf{PosPow}$, defined in Section~\ref{sec:LRS}.
From the closure properties of $\mathsf{PosPow}$, it follows that we can effectively compute $n_0\in \mathbb N$ such that $(|u_n^{(c)}|)_{n=n_0}^\infty$, and thus also
$(u^{(min)}_n)_{n=n_0}^\infty$, lie in $\mathsf{PosPow}$; hence we can determine whether $\boldsymbol u^{(min)}$ is positive.

Having disposed of Case~(i),
the remaining cases assume that $A$ has two complex eigenvalues $\lambda$ and $\overline{\lambda}$ such that $\lambda^n\not\in\mathbb R$ for all $n>0$.  
This allows us to write $u_n^{(c)}=a\lambda^n+\overline{a\lambda^n}$ and 
$u_n^{(z)}=b\lambda^n+\overline{b\lambda^n}$ for algebraic numbers $a,b$ such that $a$ is non-zero.  In particular, 
simplifying~\eqref{eq:master} by evaluating $\sum_{k=0}^{n-1}u_k^{(c)}$, for all $n\in \mathbb N$ we have
\begin{gather}
u_n^{(min)} = c+d\lambda^n+\overline{d\lambda^n} + \left(\frac{\varepsilon_0-\varepsilon_1}{2}\right) \sum_{k=0}^{n-1}|u^{(c)}_k| \, ,
    \label{eq:master-comp}
\end{gather}
where, writing
$\gamma:=\frac{a(\varepsilon_0+\varepsilon_1)}{2(1-\lambda)}$,  we put $c
:= \gamma+\overline{\gamma}$ and  $d:=b-\gamma$. 

\emph{Case (ii): $A$ has spectral radius $>1$}.  In this case we have $|\lambda|>1$.
If $d=0$, then the right-hand side of~\eqref{eq:master-comp} diverges to minus infinity and hence $\boldsymbol u^{(min)}$ is not positive.
Suppose then that $d\neq 0$.  By~\eqref{eq:master-comp}, for $n \in \mathbb N$ we have 
\begin{eqnarray*}
    u_n^{(min)} & \leq & c+ d\lambda^n + \overline{d\lambda^n}\\
    &=& c + |\lambda|^n \left[d \left(\frac{\lambda}{|\lambda|}\right)^n+ \overline{d} \left(\frac{\overline \lambda}{|\lambda|}\right)^n\right] \, .
\end{eqnarray*}
By~\eqref{eq:liminf} we have $\liminf_{n\rightarrow \infty} d \left(\frac{\lambda}{|\lambda|}\right)^n+ \overline{d} \left(\frac{\overline \lambda}{|\lambda|}\right)^n
=-2|d|<0$.  Since $|\lambda|>1$ it follows that 
$\liminf_{n\rightarrow \infty} u_n^{(min)}=-\infty$ and, in particular,
$\boldsymbol u^{(min)}$ is not positive.  

\emph{Case (iii): $A$ has spectral radius $1$.}  Since
$|\lambda|=1$, applying~\eqref{eq:liminf},  we have $\liminf_{n\rightarrow \infty} u_n^{(c)}<0$.  Hence 
$\sum_{k=0}^{n-1} |u^{(c)}_k|$ diverges to infinity as $n \rightarrow \infty$. But  
$c+d\lambda^n+\overline{d}\overline{\lambda}^n$ is bounded.  Inspecting~\eqref{eq:master-comp}, we see that
$\boldsymbol u^{(min)}$ is not positive.

\emph{Case (iv): $A$ has spectral radius $<1$.}
We claim that the limit $\lim_{n\rightarrow\infty} u_n^{(min)}$ exists and is non-zero.  
On the one hand, since $|\lambda|<1$, we have 
\[\lim_{n\rightarrow\infty} \left(u_n^{(z)}+\frac{\varepsilon_0+\varepsilon_1}{2}\sum_{k=0}^{n-1} u_k^{(c)}\right)= \lim_{n\rightarrow\infty} \left(c+d\lambda^n + \overline{d\lambda^n} \right)= c\, .\]
On the other
hand, since $u_n^{(c)} = a\lambda^n + \overline{a\lambda^n}$ for
some non-zero $a\in \overline{\mathbb Q}$, we have that
$\sum_{n=0}^\infty |u_n^{(c)}|$ is transcendental by
Theorem~\ref{thm:main}, below.
Since $\varepsilon_0<\varepsilon_1$ and $\varepsilon_0,\varepsilon_1,c$ are algebraic,
we conclude from~\eqref{eq:master} that $\lim_{n\rightarrow\infty} u_n^{(min)}$
is non-zero.  This proves the claim.  Now we can approximate  this limit numerically to arbitrary precision.
If the limit is negative, then 
$\boldsymbol{u}^{(min)}$ is not positive.  On the other hand, if the limit is positive, then we can compute $n_0$ such that
$u_n^{(min)}>0$ for all $n\geq n_0$. In this case we can determine positivity for $\boldsymbol u^{(min)}$ by checking that 
$u_0^{(min)},\ldots,u_{n_0-1}^{(min)}$ are all nonnegative.

The above case split is effective.  If $A$ has real spectrum then it falls under Case~(i).  Otherwise,
$A$ has complex conjugate eigenvalues $\lambda$ and $\overline{\lambda}$, and Case~(i) applies 
exactly when $\lambda / \overline{\lambda}$ is a root of unity.  But root-of-unity testing and comparison of $|\lambda|$ with 1 are effective, so we can determine 
which among Cases (i)--(iv) applies.
\end{proof}

The critical
ingredient  in the preceding argument was Theorem~\ref{thm:main}:
\begin{restatable}{theorem}{mainthm}
Let $\lambda, a \in \overline{\mathbb{Q}}$ be non-zero algebraic numbers such that $|\lambda|<1$ and $\lambda/\overline{\lambda}$ is not a root of unity.
Then
$\alpha := \sum_{n=0}^\infty |a\lambda^n + \overline{a\lambda^n}|$
is transcendental.
\label{thm:main}
\end{restatable}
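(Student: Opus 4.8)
\emph{Plan.} I would assume $\alpha\in\overline{\mathbb{Q}}$ and derive a contradiction from the existence of unusually good algebraic approximations to $\alpha$, obtained from the almost-periods of the rotation sequence underlying the absolute values and ruled out by the $p$-adic Subspace Theorem, in the spirit of~\cite{LOW25}. Write $\lambda=re^{i\theta}$ and $a=\rho e^{i\phi}$, so that $a\lambda^n+\overline{a\lambda^n}=2\rho r^n\cos(n\theta+\phi)$; putting $s_n:=\mathrm{sgn}(\cos(n\theta+\phi))\in\{-1,0,1\}$ and $S:=\sum_{n\ge 0}s_n\lambda^n$ (so $|S|\le(1-r)^{-1}$) one has
\[
\alpha=\sum_{n\ge 0}s_n\bigl(a\lambda^n+\overline{a\lambda^n}\bigr)=2\,\mathrm{Re}(aS).
\]
The hypothesis $\lambda^n\notin\mathbb{R}$ for $n\ge 1$ is equivalent to $\theta/\pi\notin\mathbb{Q}$, and entails: (i) $s_n=0$ for at most one $n$ (two zeros would force $(n-m)\theta\in\pi\mathbb{Z}$), so $\alpha$ is a genuinely infinite sum; (ii) $(s_n)_n$ is not eventually periodic up to a global sign, since a rotation of the circle fixing one of the two half-circles $\{x:\cos x>0\}$, $\{x:\cos x<0\}$ must be trivial; and (iii) $\lambda$ and $\overline{\lambda}$ are multiplicatively independent and neither is a root of unity (from $\lambda^k=\overline{\lambda}^\ell$ one gets $k=\ell$ by comparing moduli, whence $\lambda^k\in\mathbb{R}$). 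Finally $e^{i\theta}=\lambda/|\lambda|$ and $e^{i\phi}=a/|a|$ are algebraic.

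\emph{Baker inputs and approximations.} Applying Baker's lower bounds for linear forms in logarithms to $e^{i\theta},e^{i\phi},-1,i$ gives $|n\theta-k\pi|\gg(\max(n,|k|))^{-O(1)}$ (the form being nonzero since $\lambda^n\notin\mathbb{R}$) and $|\cos(n\theta+\phi)|\gg n^{-O(1)}$ for all but finitely many $n$; hence $\theta/\pi$ has finite irrationality measure, so its continued-fraction denominators satisfy $q_{k+1}\le q_k^{O(1)}$, and the orbit $\{n\theta\bmod 2\pi\}$ is quantitatively equidistributed. Standard facts about rotation sequences then yield infinitely many triples $(q,P,\sigma)$ with $\sigma\in\{\pm 1\}$, $2q\le P\le q^{O(1)}$, such that $s_{n+q}=\sigma s_n$ for all $0\le n<P-q$ (here $q$ runs over convergent denominators of $\theta/\pi$ and $\sigma$ records whether $q\theta$ is near $0$ or near $\pi$ mod $2\pi$). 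For such a triple set $P_q(X):=\sum_{0\le j<q}s_jX^j$ and
\[
\rho_q:=\frac{P_q(\lambda)}{1-\sigma\lambda^q}=\sum_{n\ge 0}\sigma^{\lfloor n/q\rfloor}s_{n\bmod q}\,\lambda^n\in\overline{\mathbb{Q}},
\]
an algebraic number of degree bounded independently of $q$; then $S-\rho_q=\sum_{n\ge P}\bigl(s_n-\sigma^{\lfloor n/q\rfloor}s_{n\bmod q}\bigr)\lambda^n$ vanishes term by term for $n<P$, so $|S-\rho_q|\le 2|\lambda|^P/(1-|\lambda|)$ and $\gamma_q:=2\,\mathrm{Re}(a\rho_q)\in\overline{\mathbb{Q}}$ satisfies $|\alpha-\gamma_q|\le C|\lambda|^{2q}$.

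\emph{Subspace Theorem and conclusion.} A single such approximation does not contradict Liouville's inequality: the Weil height of $\rho_q$ is itself of order $q\,h(\lambda)$ (coming from $1-\sigma\lambda^q$ at the finite places where $\lambda$ is non-integral, equivalently from the conjugates of $\lambda$ of modulus $>1$), so the Liouville exponent is also linear in $q$. Instead one feeds the whole family into Schlickewei's $p$-adic Subspace Theorem~\cite{Schlickewei76} over $K:=\mathbb{Q}(\lambda,a,i)$: to each good $q$ one attaches the integer vector of the coordinates of $\lambda^q,\overline{\lambda}^q,P_q(\lambda),\overline{P_q(\lambda)}$ in a fixed basis, with denominators cleared, and at each archimedean place and each finite place of $K$ dividing $\lambda$ or $\overline{\lambda}$ one selects a system of linearly independent linear forms evaluating to the ``denominators'' $1-\sigma\lambda^q$, $1-\sigma\overline{\lambda}^q$ on the relevant coordinates, to (the cleared denominator times) $\alpha-\gamma_q$ on a remainder coordinate, and to the coordinate functions otherwise. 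The archimedean smallness $|\lambda|^{2q}$ of the remainder, balanced against the reciprocal growth $|\lambda^q|_v^{-1}$ at the finite places and with the whole set-up made uniform by $q_{k+1}\le q_k^{O(1)}$, makes the product of all these forms smaller than $H^{-\varepsilon}$ (with $H$ the height of the vector) for a fixed $\varepsilon>0$, so the theorem applies and infinitely many of the vectors lie in one fixed proper subspace. This yields a fixed nontrivial relation $c_0+c_1\lambda^q+c_2\overline{\lambda}^q+c_3P_q(\lambda)+c_4\overline{P_q(\lambda)}=0$ valid for infinitely many $q$. If $c_3=c_4=0$ this is an $S$-unit equation in the multiplicatively independent $S$-units $\lambda^q,\overline{\lambda}^q$, with only finitely many solutions --- a contradiction. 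Otherwise, using also the complex-conjugate relation and then comparing the approximations at consecutive convergent scales $q_k,q_{k+1}$ (polynomially related by Baker), the relation propagates to show that $(s_n)_n$ is eventually periodic up to a global sign, contradicting (ii). Hence $\alpha$ is transcendental.

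\emph{Main obstacle.} The delicate part is the Subspace-Theorem step: choosing the place set and the linear forms so that the product over all places genuinely drops below $H^{-\varepsilon}$ --- the ``denominators'' $1-\sigma\lambda^q$ do not shrink, so every bit of the gain must come from balancing the archimedean smallness of the remainder against finite-place growth, and the complex-conjugate pair $\lambda,\overline{\lambda}$ roughly doubles the number of variables and forms --- together with a careful classification of the exceptional subspaces ruling out every degenerate configuration. A secondary technical point, and the reason an arbitrary algebraic $a$ forces this route rather than the more elementary argument available for real $a$ in~\cite{LOW}, is that for a generic phase $\phi=\arg a$ the cut-points of the coding partition sit at a non-special location, so the combinatorial almost-period statement must itself be extracted from the quantitative equidistribution above.
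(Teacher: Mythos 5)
Your overall strategy is the same as the paper's: use the denominators of convergents of $\theta$ (with Baker's theorem guaranteeing a polynomial bound on the next denominator), build algebraic approximants to the sign-weighted sum, and rule out too-good approximation via Schlickewei's $p$-adic Subspace Theorem. However, there are two genuine gaps, and they sit exactly where the substance of the proof lies. First, your claim that ``standard facts about rotation sequences'' give infinitely many $(q,P,\sigma)$ with $2q\le P\le q^{O(1)}$ and $s_{n+q}=\sigma s_n$ for \emph{all} $0\le n<P-q$ is not justified and cannot be guaranteed. The indices at which the sign pattern breaks under translation by $q$ are those $n$ with $\{n\theta+\psi\}$ within distance $\approx\|q\theta\|$ of $1/4$ or $3/4$; one can show the first such index tends to infinity with $q$ (this is Item~1 of Proposition~\ref{prop:gap}), but with no usable rate --- there is no argument forcing it to exceed $q$, since $\|n\theta+\psi-1/4\|$ for $n<q$ is only polynomially bounded below (by Baker) while $\|q\theta\|$ is also only polynomially small, and the exponents need not compare favourably. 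Your construction needs the clean stretch to start at $n=0$ (otherwise $S-\rho_q$ does not telescope), so you cannot shift past the bad indices without introducing a ``head'' term --- which is precisely the paper's $\nu_n$, together with a bounded number $\delta$ of exceptional terms $\xi_{n,j}$ whose shape and spacing are controlled by Propositions~\ref{prop:Delta} and~\ref{prop:gap}. This bookkeeping is not an optional refinement; it is what makes the approximation step true.

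Second, your endgame after the Subspace Theorem is asserted rather than proved, and the assertion hides the hardest point. You dispose of the case $c_3=c_4=0$ by an $S$-unit argument, but in the remaining case you only say the relation ``propagates to show $(s_n)_n$ is eventually periodic up to a global sign''; no mechanism is given, and none is readily available, because your coordinates $P_q(\lambda),\overline{P_q(\lambda)}$ are sums of $\sim q$ monomials, so there is no Baker-type lower bound for linear forms involving them. The paper's resolution is different and concrete: Proposition~\ref{prop:vanish} shows (via Proposition~\ref{prop:baker}) that any vanishing form returned by Theorem~\ref{thm:SUBSPACE} must have non-zero coefficient on the single ``large'' coordinate $\nu_n$; one then eliminates $\nu_n$ from $L$, obtaining a form $L'$ whose value at $\Lambda_n$ is a \emph{fixed} number of monomials, lower-bounds $|L'(\Lambda_n)|$ by $|\lambda|^{2r_n+m_{n,\delta}}(2r_n+m_{n,\delta})^{-c_0}$, and contradicts the decay $\ll|\lambda|^{2r_n+m_{n,\delta+1}}$ using the gap estimates $m_{n,\delta+1}-m_{n,\delta}\gg r_n$ and $m_{n,\delta}\ll r_n^{c_1}$. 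Without an analogue of this exceptional-subspace analysis (and of the monomial-by-monomial design of the vector $\Lambda_n$ that makes it possible), your argument does not close; you have correctly identified this as the main obstacle, but identifying it is not the same as overcoming it.
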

The proof of Theorem~\ref{thm:main} is by contradiction, starting from
the assumption that $\alpha$ is algebraic.  The details are given in
Sections~\ref{sec:partI} and~\ref{sec:main}.  Here we give a brief overview.

A general method to prove transcendence of a number is by considering
its rational approximations.  Ridout's theorem states that if $\alpha$
is an algebraic number, then for every $\varepsilon>0$ and finite set
$S\subseteq \mathbb Z$ of primes, there are only finitely many
solutions $p\in Z$ and $q\in \mathbb Z_{>0}$, of the inequality
$0<|q \alpha-p|<q^{-\varepsilon}$, subject to the condition that all
prime factors of $q$ lie in $S$.  Thus, for a given number $\alpha$,
if one can find $\varepsilon>0$ such that the above inequality has
infinitely many solutions, then one has shown that $\alpha$ is
transcendental.  Our approach is ultimately 
based on a higher-dimensional generalisation of Ridout's theorem, called
the \emph{$p$-adic Subspace Theorem}.

The starting point of the proof of Theorem~\ref{thm:main} is to observe that the term
$a\lambda^n + \overline{a\lambda^n} $ satisfies an order-2 linear
recurrence with algebraic coefficients.  While the summand
$|a \lambda^n + \overline{a\lambda^n}|$ in the theorem statement is
not itself an LRS, we are able to identify a family of linear
recurrences of successively higher orders such that the sequence
$|a \lambda^m + \overline{a\lambda^m}|$ satisfies each recurrence at
all indices outside an exceptional set of low density.
The construction of the recurrences is based
on Diophantine-approximation properties of the number $\theta \in (0,1)$
such that $\lambda = |\lambda|\exp(2\pi i \theta)$.

In the second step of the proof, we find a single linear form
$L(x_1,\ldots,x_m)$  with algebraic coefficients (among which is the number $\alpha$, assumed to be algebraic) 
and a sequence $(\Lambda_n)_{n=0}^\infty$, where $\Lambda_n \in
\mathcal O^m$ for a suitable ring $\mathcal O$ of algebraic numbers,
such that
$L(\Lambda_n)$ converges to zero very quickly as $n$ tends to infinity.
Each tuple $\Lambda_n$ arises from one of the linear recurrences identified in the first step of the proof.

In the third step of the proof we use
the assumption that $\alpha$ is algebraic to prove a lower bound on 
$|L(\Lambda_n)|$.  This step is based on a generalisation of classical results 
on lower bounds on sums of $S$-units, which are ultimately based on the Subspace Theorem.
We show that this lower bound
contradicts the previously obtained bounds on the speed of convergence of $L(\Lambda_n)$ to zero and conclude that
$\alpha$ cannot be algebraic after all.

\section{Approximation by Linear Recurrences}
\label{sec:partI}
In this section we analyse the numbers
$v_m=|a\lambda^m+\overline{a\lambda^m}|$ that appear as summands in the
series in Theorem~\ref{thm:main}.  
While $(v_m)_{m=0}^\infty$ is not an LRS (by~\cite[Lemma 4.7]{Ward21}),
we show that it satisfies a family of linear recurrences outside sparse exceptional sets.

We start by recalling some standard notation.  Every $r \in \mathbb{R}$ can be
written uniquely in the form $r = \lfloor r \rfloor + \{ r\}$, where
$\lfloor r \rfloor \in \mathbb Z$ is the \emph{integer part} of $r$
and $\{ r\} \in [0,1)$ is the \emph{fractional part} of~$r$.  Write
also $\lVert r\rVert:=\min_{m\in\mathbb Z} |r-m|$ for the distance of
$r$ to the nearest integer.  We use the Vinogradov notation
$f(n)\ll g(n)$ to mean $f(n)=O(g(n))$ for
$f,g \colon \mathbb N \to \mathbb R$.

\begin{example}
  Consider the sequence $u_m:=\lambda^m+\overline{\lambda^m}$, where
  $\lambda:=\frac{1}{3}+\frac{i\sqrt{14}}{6}$.  This satisfies the order-2 recurrence
  $u_{m+2}=\frac{2}{3}u_{m+1}-\frac{1}{2}u_m$ for all $m\in\mathbb N$.  
Writing
  $a:=\lambda^{64}+\overline{\lambda^{64}}$ and
  $b:=(\lambda\overline{\lambda})^{64}$, it is easy to verify by direct calculation that $\boldsymbol u$ also
  satisfies the order-128 recurrence 
  \begin{gather}u_{m+128}-a u_{m+64}+b u_m=0\quad (m\in \mathbb N)
  \label{eq:exrecur1}
  \end{gather}

Now we consider at which indices $m\in\N$
the recurrence~\eqref{eq:exrecur1} is satisfied by the sequence $|u_m|$ of absolute values.  The first eight values of $m\in\mathbb N$ 
  at which the recurrence 
  \begin{gather} |u_{m+128}|-a|u_{m+64}|+b|u_m|=0\, .
\label{eq:exrecur}
\end{gather}
fails are $m=11027,11059,11091,11123,22134,22166,22198,22230$.  
To understand this fact, we observe that~\eqref{eq:exrecur} holds for a given 
$m\in\mathbb N$ if $u_m,u_{m+64},u_{m+128}$ all have the same sign, since in this case~\eqref{eq:exrecur1} and~\eqref{eq:exrecur} are equivalent.
Writing $\lambda=\frac{1}{\sqrt{2}}\exp(2\pi i \theta)$, where $\theta \in (0,1)$, we have 
$u_m = 2(\sqrt{2})^{-m}\cos(2\pi  m\theta)$. 
Since
$\cos(x)$ changes sign at $\frac{\pi}{2}$ and $\frac{3\pi}{2}$, Equation~\eqref{eq:exrecur} fails only for values of $m$ at which 
some but not all of  $\{m\theta\},\{(m+64)\theta\},\{(m+128)\theta\}$ lie in $(1/4,3/4)$.
Since $64\theta=11-\varepsilon$, where $\varepsilon\approx  9\times 10^{-5}$,~\eqref{eq:exrecur} fails only for values of $m$ at which 
the orbit $\{m\theta\}$ lands in
$J:=\left(\frac{1}{4},\frac{1}{4}+2\varepsilon\right) \cup \left(\frac{3}{4},\frac{3}{4}+2\varepsilon\right)$.  We can obtain upper and lower 
bounds on the distance between successive $m\in\mathbb N$ for which~\eqref{eq:exrecur} fails by 
analysing the return times of the orbit to $J$.  In the present example, the first-return-time function on 
each of the two intervals composing $J$, considered separately, takes values  
$64,11043,11107$ (the first and third numbers are two consecutive convergent denominators of  the continued fraction expansion of $\theta$ and the middle value 
is their difference).
  \end{example}

\subsection{Continued Fractions}
\label{sec:continued}
For the complex number $\lambda$ in Theorem~\ref{thm:main}, let
$\theta \in (0,1)$ be defined by
$\lambda = |\lambda|\exp(2\pi i \theta)$, i.e., $\theta$ is the
argument of $\lambda$ normalised to lie in $(0,1)$.  The assumption
that $\lambda^n\not\in\mathbb R$ for all $n>0$ entails that $\theta$
is irrational.  
In this subsection we consider 
the orbit of a point under the rotation map $R_\theta:[0,1)\rightarrow [0,1)$,
given by $R_\theta(x):=\{x+\theta\}$.
For an interval $J\subseteq [0,1)$, define the map $r_J : J \rightarrow \N$ by $r_J(x):=\min\{ m \geq 1: (R_\theta)^m(x) \in J\}$, giving the number of steps for $x \in J$ to return to $J$ under the action of $R_\theta$.  

We recall some basic facts about continued fractions.  Full details can be found in~\cite[Section 6.2]{Baker2012}.
The simple continued fraction expansion of $\theta$ is the infinite sequence of integers 
$$[0; a_1, a_2, a_3, \ldots]$$
defined by $a_{n+1}:=\left\lfloor \frac{1}{\theta_n} \right\rfloor$, where the $\theta_n$ are
defined inductively by $\theta_0:=\theta$ and
$\theta_{n+1}:=\left\{ \frac{1}{\theta_n} \right\}$ for all $n\in \mathbb N$.  This
sequence is well-defined and infinite if $\theta$ is irrational.
Given $n\in \mathbb{N}$, we write
\[
\frac{p_n}{q_n} =
[0; a_1, a_2, \ldots, a_n]
=
\cfrac{1}{a_1+\cfrac{1}{a_2+\cfrac{1}{\ddots+\cfrac{1}{a_n}}}} \, .
\]
for the \emph{$n$-th convergent} of the expansion, where $p_n$ and $q_n$ satisfy 
the recurrences 
\[
p_{n+1}=a_{n+1}p_{n}+p_{n-1} \qquad q_{n+1}=a_{n+1}q_{n}+q_{n-1} 
\]
for all $n\in \mathbb N$, with initial conditions $p_{-1}=1,p_0=0,q_{-1}=0,q_0=1$.
Then the sequence $\frac{p_n}{q_n}$ converges to $\theta$.  More precisely, 
writing $\varepsilon_n:=q_n\theta-p_n$, we have $|\varepsilon_n|=\|q_n\theta\|$; the sequence $(\varepsilon_n)_{n=0}^\infty$ converges to zero and has
alternating sign.  We moreover have the following \emph{best-approximation property}: a positive integer $q$ lies in the set
$\{q_0,q_1,q_2,\ldots\}$ if and only if $\|q\theta\|<\|q'\theta\|$ for all $0<q'<q$.

\begin{proposition}
    Let $n \in \mathbb N$ and let $J=(\alpha,\beta)\subseteq (0,1)$ be an interval of length $\beta-\alpha > |\varepsilon_n| + |\varepsilon_{n+1}|$.  
    Then for all $x\in J$ we have $r_J(x) \leq q_{n+1}$.
    \label{prop:return}
\end{proposition}
\begin{proof}
For concreteness we give the argument in the case that $\varepsilon_n>0$ and $\varepsilon_{n+1}<0$.  The case that $\varepsilon_n<0$  and $\varepsilon_{n+1}>0$ is symmetric.

Let $x \in J$. 
If $x \in (\alpha,\beta-\varepsilon_n)$,
then $(R_\theta)^{q_n}(x)=\{x+\varepsilon_n\} \in J$.  
If $x \in (\alpha-\varepsilon_{n+1},\beta)$, then $(R_\theta)^{q_{n+1}}(x)=\{x+\varepsilon_{n+1}\} \in J$.
Since $\beta-\varepsilon_{n}>\alpha-\varepsilon_{n+1}$, these two cases are exhaustive.
 \end{proof}

\subsection{A Family of Recurrences}
\label{sec:recurrences}
Let $\lambda$ and $a$ be as in the statement of Theorem~\ref{thm:main}.
Recall that we defined $\theta \in (0,1)$ by putting $\lambda = |\lambda|\exp(2\pi i \theta)$.
Let us likewise define $\psi \in [0,1)$ by putting $a = |a|\exp(2\pi i \psi)$.
We consider the second-order LRS $\boldsymbol u = (u_m)_{m=0}^\infty$, defined by 
\begin{equation}
\begin{aligned}
    u_m &\, := \, a\lambda^m+\overline{a\lambda^m} \\ & \, = \, |2a\lambda^m| \cos(2\pi(m\theta+\psi))\,  .
\end{aligned}
  \label{eq:def-u}
\end{equation}
Since $\theta$ is irrational,
we may assume that $u_m\neq 0$ for all $m\in \mathbb N$ by passing to a tail of the sequence
(which does not affect the transcendence of the sum in Theorem~\ref{thm:main}).

Let $(q_n)_{n=0}^\infty$ be the sequence of denominators of the convergents of 
the continued-fraction expansion of $\theta$ as described in Section~\ref{sec:continued}.
For all $n \in \mathbb N$ write
\begin{equation}
    \begin{aligned}
    a_n &\, :=\, \lambda^{q_n}+\overline{\lambda^{q_n}} \\
    b_n &\, :=\, \lambda^{q_n}\overline{\lambda^{q_n}} \, .
\end{aligned}
\label{eq:anbn}
\end{equation}
An easy verification using the formula $u_m=a\lambda^m +\overline{a\lambda^m}$
shows that for fixed but arbitrary $n\in \mathbb N$, the sequence $\boldsymbol u=(u_m)_{m=0}^\infty$ satisfies the linear recurrence relation
\begin{gather}
u_{m+2q_n}-a_nu_{m+q_n}+b_n u_m=0 \qquad (m\in \mathbb N) \, .
\label{eq:LRS1}
\end{gather}

For each fixed $n\in \mathbb N$, we analyse the set of indices $m\in \mathbb N$ such that the recurrence~\eqref{eq:LRS1} holds for the sequence $|u_m|$ of absolute values:
\[
|u_{m+2q_n}|-a_n|u_{m+q_n}|+b_n |u_m|=0\, .
\]
To this end we define
\begin{gather}
w_{n,m}:=
|u_{m+2q_n}|-a_n|u_{m+q_n}|+b_n|u_m| \qquad (m,n\in\mathbb N)
\label{eq:def-w}
\end{gather}
and let $\Delta_n:=\{m\in\mathbb N: w_{n,m}\neq 0 \}$.  Our objective is to study the structure of $\Delta_n$.

Let $N_0\in\mathbb N$ be such that $|\varepsilon_n|<\frac{1}{8}$ for all $n\geq N_0$.  For $n\geq N_0$, define
\[ J_n:= \begin{cases}
    \left(\frac{1}{4}-2\varepsilon_n,\frac{1}{4}\right)\cup\left(\frac{3}{4}-2\varepsilon_n,\frac{3}{4} \right) & \text{if $\varepsilon_n>0$}\\
     \left(\frac{1}{4},\frac{1}{4}-2\varepsilon_n\right)\cup\left(\frac{3}{4},\frac{3}{4}-2\varepsilon_n \right) & \text{if $\varepsilon_n<0$}
\end{cases}\]
The following proposition characterises $\Delta_n$ as the set of visit times $m\in \mathbb N$ of the orbit 
$\{m\theta+\psi\}$ to $J_n$.

\begin{proposition}
The following are equivalent for all $m$ and all $n\geq N_0$.
    \begin{enumerate}
        \item $m \in \Delta_n$;
        \item $\{m\theta+\psi\} \in J_n$;
        \item $w_{n,m} \in \{ \pm 2 u_{m+2q_n}, \pm 2b_n u_m \}$.
            \end{enumerate}
    \label{prop:Delta}
\end{proposition}
\begin{proof}
We first show that {\bf 1} implies {\bf 2}.
To this end, notice that if $u_m,u_{m+q_n},u_{m+2q_n}$ all have the same sign, then by~\eqref{eq:LRS1} we have
\[ w_{n,m} = \pm(u_{m+2q_n}-a_nu_{m+q_n}+b_n u_m) = 0 \, .
\]
Suppose that $m\in \Delta_n$.  Then 
$u_{m},u_{m+q_n},u_{m+2q_n}$ do not all have the same sign.
Since $u_m = |2a\lambda^m|\cos(2\pi(m\theta+\psi))$, we have $u_m<0$ 
if and only if $\{ m\theta+\psi \} \in (1/4,3/4)$.  It follows that either one or two of 
$\{m\theta+\psi\}$, $\{m\theta+\psi+\varepsilon_n\}$, $\{m\theta+\psi+2\varepsilon_n\}$ lie in 
the interval $(1/4,3/4)$.
Since $|\varepsilon_n|<1/4$, by the assumption $n\geq N_0$, we conclude 
that $\{m\theta+\psi\} \in J_n$.  

We next show that {\bf 2} implies {\bf 3}.
Suppose that $\{m\theta+\psi\} \in J_n$.  
Since $|\varepsilon_n| < 1/4$, the tuple $(u_m,u_{m+q_n},u_{m+2q_n})$ contains exactly one sign change and hence  its sign vector lies in the set 
 \[ \{ (+,+,-),\;(-,-,+),(+,-,-),(-,+,+) \} \, .\]
 Combined with Equation~\eqref{eq:LRS1}, this entails that $w_{n,m}=\pm 2u_{m+2q_n}$ or $w_{n,m}=\pm 2b_nu_{m}$.

 Finally,  we show that {\bf 3} implies {\bf 1}.
 Suppose that $w_{n,m}=\pm 2u_{m+2q_n}$ or $w_{n,m}=\pm 2b_nu_{m}$.  Then $m\in \Delta_n$ since 
 $b_n\neq 0$ for all $n \in \mathbb N$ and 
 $u_m\neq 0$ for all $m\in \mathbb N$ (an assumption made above without loss of generality earlier on).
\end{proof}

\begin{proposition}
For all $n\geq N_0$, let
$m_{n,1} < m_{n,2} < \cdots$ be an increasing enumeration of 
$\Delta_n$.  Then:
\begin{enumerate}
    \item  for all $j \geq 0$ we have $\lim_{n\rightarrow \infty}(m_{n,j+1}-m_{n,j})=\infty$, where we define  $m_{n,0}:=0$;
     \item for all $j\geq 1$  
    it holds that  $m_{n,j+4}-m_{n,j}  \geq {q_{n+1}}$; 
    \item for all $j\geq 1$  
    we have $\left\lfloor\frac{j-1}{4} \right\rfloor q_{n+1} \leq m_{n,j} \leq j \, q_{n+1}$.
\end{enumerate}
\label{prop:gap}
\end{proposition}
\begin{proof}
Throughout the proof we use the fact, established in
Proposition~\ref{prop:Delta}, that $m\in \Delta_n$ if and only if $\{m\theta+\psi\} \in J_n$.  The latter implies that 
$\left\| m\theta+ \psi - \frac{1}{4}\right\| \leq 2|\varepsilon_n|$ or
$\left\| m \theta +\psi  - \frac{3}{4}\right\| \leq 2|\varepsilon_n|$ for $m\in \Delta_n$.
In particular, $\| 4m_{n,1} \theta + 4\psi \| \leq 8|\varepsilon_n|$ and, for all $j\geq 1$,
$\| 4(m_{n,j+1}-m_{n,j})\theta\| \leq 16|\varepsilon_n|$.
Since $\varepsilon_n$ converges to $0$ as $n$ tends to infinity, it follows that $\lim_{n\rightarrow \infty} (m_{n,j+1}-m_{n,j})=\infty$ for all $j\geq 0$, establishing Item~{\bf 1}.

Next we show Item {\bf 2}.  By the pigeonhole principle, every
five consecutive visits of the orbit $\{m\theta+\psi\}$ to $J_n$ contain two points at distance strictly less than $|\varepsilon_n|$.
By the law of best approximation the number of applications of $R_\theta$ between these two points is at least $q_{n+1}$.

Finally we turn to Item {\bf 3}.  Applying Proposition~\ref{prop:return} to each component of $J_n$, we have  $r_{J_n}(x)\leq q_{n+1}$.  
This shows that $m_{n,j+1}-m_{n,j} \leq q_{n+1}$ for all $j \in \mathbb N$.  It also implies that $m_{n,1}\leq q_{n+1}$ since, if we 
run the orbit backward from its starting point $\psi$ we eventually reach $J_n$,
so 0 lies on an orbit from $J_n$ to $J_n$.
We conclude that $m_{n,j} \leq jq_{n+1}$. 
The lower bound $m_{n,j} \geq \left\lfloor\frac{j-1}{4} \right\rfloor q_{n+1}$ follows from Item {\bf 2}.
\end{proof}

\section{Transcendence Result}
\label{sec:main}
\subsection{Sums of $S$-Units}
\label{sec:sums}
In this subsection we recall some facts about absolute values on
number fields and we state a number-theoretic lemma that will be used to obtain
the transcendence result.  We refer
to~\cite{Bilu08} for a more detailed exposition of the background material.

Let $\mathbb K$ be a finite-dimensional field extension of $\mathbb Q$ and denote by
$\mathcal O_{\mathbb K}$ the subring of $\mathbb K$ consisting of algebraic integers, 
that is, elements in $\mathbb K$ that are roots of
monic polynomials with integer coefficients.  
For non-zero $a \in \mathcal O_{\mathbb K}$ and $\mathfrak p$ a prime ideal of $\mathcal O_{\mathbb K}$,
we define $\mathrm{ord}_{\mathfrak p}(a):=\max\{ k \in \mathbb N : a \in \mathfrak p^k\}$ to be the order to which $\mathfrak p$ divides $a$.  More generally,
for non-zero $a \in \mathbb K$, where $a = b/c$ for some $b,c\in \mathcal O_{\mathbb K}$, we define 
$\mathrm{ord}_{\mathfrak p}(a):=\mathrm{ord}_{\mathfrak p}(b)-\mathrm{ord}_{\mathfrak p}(c)$.

An absolute value on $\mathbb K$ is a function $|\cdot|:\mathbb K \rightarrow \mathbb R_{\geq 0}$
such that $|a|=0$ if and only if $a=0$, $|ab|=|a||b|$, and $|a+b|\leq |a|+|b|$ for all $a,b\in \mathbb K$.
We say that two absolute values $|\cdot|_1$ and $|\cdot|_2$ on
$\mathbb K$ are \emph{equivalent} if there exists a strictly positive real number $c$
such that $|\cdot|_1=|\cdot|_2^c$. A \emph{place} of $\mathbb K$
is an equivalence class of absolute values.  Denote by $M(\mathbb K)$
the set of places of $\mathbb K$.  A classical theorem of Ostrowski shows that each place of $M(\mathbb K)$
is determined either by an embedding of
$\mathbb K$ in $\mathbb C$ or by a
prime ideal of $\mathcal O_{\mathbb K}$.   Specifically, for each place $v\in M(\mathbb K)$ we may
pick a representative absolute value $|\cdot|_v$ as follows, where $|\cdot|$ denotes the standard  absolute value on $\mathbb C$.\footnote{The system below is obtained from
the usual normalised system by raising every absolute value to the
common power \([\mathbb K:\mathbb Q]/2\).  Thus the associated height is
a fixed positive power of the usual projective Weil height.}
\begin{enumerate}[(i)]
    \item if $v$ corresponds to an embedding
$\sigma:\mathbb K\rightarrow \mathbb R$, then define
$|\cdot|_v := |\sigma(\cdot)|^{1/2}$; 
\item if $v$ corresponds to a
complex-conjugate pair of embeddings
$\sigma,\overline{\sigma}:\mathbb K \rightarrow \mathbb C$, then
define
$|\cdot|_v := |\sigma(\cdot)|=|\overline{\sigma}(\cdot)|$;
\item if $v$ corresponds to a prime ideal $\mathfrak p$ of
$\mathcal O_{\mathbb K}$, then write
$|a|_v := N(\mathfrak p)^{-\mathrm{ord}_{\mathfrak p}(a)/2}$ for non-zero
$a \in \mathbb K$, where $N(\mathfrak p)$ is the order of the residue field $\mathcal O_{\mathbb K}/\mathfrak p$.
\end{enumerate}
The above choice of absolute values is such that the \emph{product formula} holds:
for every non-zero
$a\in \mathbb K$ it holds that $\prod_{v \in M(\mathbb K)} |a|_v=1$.

We use the notion of absolute values to define the height of a tuple of elements of $\mathbb K$,
which can be seen as a measure of the arithmetic complexity of such a point.
For $d\geq 2$, the \emph{projective height} function $H:\mathbb K^d \rightarrow\mathbb  R$
is defined by
\[  H(a_1,\ldots,a_d) := \prod_{v \in M(\mathbb K)}
\max(|a_1|_v,\ldots,|a_d|_v) \, .\]

Let $S\subseteq M(\mathbb K)$ be a finite set of places that includes all
Archimedean places. 
The ring $\mathcal O_S$ of \emph{$S$-integers}
is defined by
\[ \mathcal O_S :=\{a\in\mathbb K : |a|_v\leq 1 \text{ for all $v\in
M(\mathbb K)\setminus S$} \}  \, .\]
The units of $\mathcal O_S$ are the elements $a \in \mathbb K$ such that $|a|_v=1$ for all $v \in M(\mathbb K)\setminus S$.
We call such elements \emph{$S$-units}.

We can now state the main result of this subsection, whose 
proof, an application of the Subspace Theorem, is given in Appendix~\ref{sec:proofs}.  Intuitively the result gives  a lower bound on 
the precision of approximations of 
$S$-integers by linear combinations of $S$-units.

\begin{lemma}
Let $S\subseteq M(\mathbb K)$ be a finite set of places that includes all
Archimedean places.
Suppose that $v_0 \in S$, $d\geq 2$,
  $\alpha_0,\ldots,\alpha_d \in \mathbb K \setminus\{0\}$, and $\varepsilon>0$.
  Then there is a finite set $U\subseteq \mathbb K$ such that for every solution of the inequalities
\begin{eqnarray*}| \alpha_0 x_0 + \cdots + \alpha_dx_d|_{v_0} &<& H(x_1,\ldots,x_d)^{-\varepsilon} \, |x_d|_{v_0}  \\
|x_d|_{v_0} \,  \prod_{v \in S \setminus \{v_0\}} |x_0|_v&<& H(x_0,\ldots,x_{d-1})^{-\varepsilon} 
\end{eqnarray*}
in $S$-units $x_1,\ldots,x_d$ and $x_0 \in \mathcal O_S \setminus \{0\}$, 
we have $x_i/x_j \in U$ for some $i\neq j$ in $\{1,\ldots,d\}$.
\label{lem:s-unit-plus}
\end{lemma}

\subsection{Proof of Theorem~\ref{thm:main}}
\label{sec:partII}
In this section we restate and prove Theorem~\ref{thm:main}.  Throughout $|\cdot|$ denotes the standard absolute value on $\mathbb C$.
\mainthm*
\begin{proof}
We start by recalling relevant notation from Section~\ref{sec:partI}.  For the
algebraic number $\lambda=|\lambda|\exp(2\pi i \theta)$ we have the sequence $(q_n)_{n=0}^\infty$ of
denominators of convergents of the continued-fraction expansion of
$\theta$.  We have also the
LRS $u_m := a\lambda^m+\overline{a\lambda^m}$, the sequences
$a_n:=\lambda^{q_n}+\overline{\lambda^{q_n}}$ and
$b_n:=|\lambda|^{2q_n}$, and the
terms $w_{n,m}:=|u_{m+2q_n}|-a_n|u_{m+q_n}|+b_n|u_m|$ (defined
in~\eqref{eq:def-w}).  For all $n \in \mathbb N$, let
$0 \leq m_{n,1}< m_{n,2} < \cdots$ be an increasing enumeration of the
set $\{m:w_{n,m}\neq 0\}$.  By Proposition~\ref{prop:Delta} we have $w_{n,m_{n,j}} = 2a\xi_{n,j}+\overline{2a\xi_{n,j}}$ for all sufficiently large $n\in \mathbb N$ and all $j\in \{1,2,\ldots\}$,
where
\begin{gather}
\xi_{n,j} \in \left\{  \pm \lambda^{2q_n+m_{n,j}} , \pm \overline{\lambda^{q_n}}\lambda^{q_n+m_{n,j}} \right\} \, .
    \label{eq:def-xi}
\end{gather}

Our task is to show that
$\alpha := \sum_{m=0}^\infty |u_m|$ is transcendental.  
We proceed by assuming that $\alpha$ is algebraic and aim for a contradiction using Lemma~\ref{lem:s-unit-plus}. 
To this end,
let $\mathbb K$ be the subfield of $\mathbb C$ generated over $\mathbb Q$ by the elements $\{\alpha,\lambda, |\lambda|, a, \overline{a}\}$,  
and let $S \subseteq M(\mathbb K)$  be a finite set of places of $\mathbb K$ that contains all Archimedean places 
such that $a,\overline{a}$ are $S$-integers and  $\lambda$ and $\overline{\lambda}$ are $S$-units of $\mathbb K$.
Let $v_0 \in M(\mathbb K)$ denote the place corresponding 
to the standard absolute value $|\cdot|$ on $\mathbb K$ (we regard $\mathbb K$ as a subfield of $\mathbb C$).

For all $n\in \mathbb N$ we have the following equation, where $\nu_n$ denotes the underbraced finite sum:
\begin{eqnarray*}
    \alpha - a_n \alpha + b_n \alpha &=& 
    \underbrace{\sum_{m=0}^{2q_n-1} |u_m| -\sum_{m=0}^{q_n-1} a_n|u_m|}_{=:\nu_n}+
    \sum_{m=0}^\infty \left(|u_{m+2q_n}|-a_n|u_{m+q_n}|+b_n|u_m| \right)\\
    &=& \nu_n +\sum_{m=0}^\infty w_{n,m}\\
    &=& \nu_n + \sum_{j=1}^\infty w_{n,m_{n,j}} \, .
\end{eqnarray*}
We truncate the above sum after the first $t$ terms, where $t$ will be specified later.  For all $j\geq t+1$,
since $m_{n,j} \geq m_{n,t+1}+j-t-1$, we have
\[ |w_{n,m_{n,j}}| \leq |4a\lambda^{2q_n+m_{n,j}}| \leq |4a\lambda^{2q_n+m_{n,t+1}}| |\lambda|^{j-t-1}\, .\] Since also
$\lim_{n\rightarrow \infty} (m_{n,t+1}-m_{n,t})=\infty$, for sufficiently large $n\in \mathbb N$ we obtain
\begin{equation}
      \left |\nu_n-\alpha + a_n\alpha - b_n\alpha + \sum_{j=1}^t w_{n,m_{n,j}}\right| \,= \, \left| \sum_{j=t+1}^\infty w_{n,m_{n,j}}\right|
\, < \, |\lambda|^{m_{n,t}+2q_n} \, .
\label{eq:small}
\end{equation}
Now consider the linear form 
\[L(x_0,\ldots,x_{4+2t}):= x_0 - \alpha x_1 +\alpha x_2+\alpha x_3-\alpha x_4+  \sum_{i=3}^{2+t} (2ax_{2i-1}+\overline{2a}x_{2i})  \]
and sequence of tuples $\Lambda_n=(\Lambda_{n,0},\ldots,\Lambda_{n,4+2t})$, $n \in \mathbb N$, where
\[ \Lambda_n := \left(\nu_n,1,\lambda^{q_n},\overline{\lambda^{q_n}},\lambda^{q_n}\overline{\lambda^{q_n}},\xi_{n,1},\overline{\xi_{n,1}},\ldots,\xi_{n,t},\overline{\xi_{n,t}} 
 \, \right) \in (\mathcal O_S)^{5+2t}\, . \]
 From the assumption that $\alpha$ is algebraic, all coefficients of $L$ are algebraic.  Furthermore, from~\eqref{eq:small} we have 
 \begin{gather}
|L(\Lambda_n)| < |\lambda|^{m_{n,t}+2q_n} \, .
     \label{eq:Lsmall}
 \end{gather}
 
 Note that $\nu_n \in \mathcal O_S$.  Also, from~\eqref{eq:small}, since $a_n$, $b_n$, $w_{n.m_{n,1}},\ldots,w_{n,m_{n,t}}$
 all converge to $0$ and $n\rightarrow \infty$, we have
 $\lim_{n\rightarrow\infty} \nu_n=\alpha$.  Hence $\nu_n\neq 0$ for 
 all sufficiently large $n$. Furthermore,
elementary calculations using the triangle inequality show that there is a constant $c_1>1$ such that for all $n\in\mathbb N$,
\begin{gather}
\prod_{v\in S \setminus \{v_0\} } |\nu_n|_v \leq c_1^{q_n} \,  .
\label{eq:BOUND2}
\end{gather}
Likewise, there is a constant $c_2 >1$ such that for all $n\in\mathbb N$,
\begin{gather}
H(\Lambda_n) \leq c_2^{m_{n,t}+2q_n} \, . 
\label{eq:HEIGHT}
\end{gather}

Let \(B:=\frac{\log c_1}{\log|\lambda|}<0\).
We define $t:=s+4$, where $s \in \mathbb N$ is sufficiently large such that 
\begin{gather}
{\left\lfloor\frac{s-1}{4}\right\rfloor+B}
\ge
{\frac{s+6}{8}} = {\frac{t+2}{8}} \,  .
\label{eq:choice}
\end{gather}
Our aim is to apply Lemma~\ref{lem:s-unit-plus} to the linear form $L$, with distinguished $S$-unit 
coordinate $\Lambda_{n,3+2s}=\xi_{n,s}$, meaning that $|\Lambda_{n,3+2s}|=|\lambda^{m_{n,s}+2q_n}|$.  The following two claims verify the hypothesis of the lemma.
\begin{claim}
\label{claim:one}
Put
\(
\varepsilon_1:=
-\frac{1}{8}\frac{\log|\lambda|}{\log c_2}>0
\).
Then
\(
|\Lambda_{n,3+2s}| \cdot 
\prod_{v\in S\setminus\{v_0\}}|\nu_n|_v
\le
H(\Lambda_n)^{-\varepsilon_1}
\) for all sufficiently large $n$.
\end{claim}
\begin{proof}
We have 
\(
|\Lambda_{n,3+2s}|=|\lambda|^{m_{n,s}+2q_n}.
\)
Thus, by~\eqref{eq:BOUND2} and~\eqref{eq:HEIGHT}, to prove the claim it suffices to show that
\[
|\lambda|^{m_{n,s}+2q_n}c_1^{q_n}
\le
c_2^{-\varepsilon_1(m_{n,t}+2q_n)}\, .
\]
Taking logarithms and using the definition of \(\varepsilon_1\) and $B$, this is equivalent to
\begin{gather}
m_{n,s}+2q_n+Bq_n
\ge
\frac{1}{8}(m_{n,t}+2q_n) \, .
\label{eq:desired}
\end{gather}
Recall that \(q_n\le q_{n+1}\) and \(B<0\).  Since, by Proposition~\ref{prop:gap}(3), we also have
$m_{n,s}\ge \left\lfloor\frac{s-1}{4}\right\rfloor q_{n+1}$, it follows that 
\[
m_{n,s}+2q_n+Bq_n
\ge
\left(\left\lfloor\frac{s-1}{4}\right\rfloor+B\right)q_{n+1}.
\]
Likewise, since $m_{n,t}\leq t q_{n+1}$ by Proposition~\ref{prop:gap}(3), we also have
\[
m_{n,t}+2q_n\le (t+2)q_{n+1} \, .
\]
The desired inequality~\eqref{eq:desired} now follows from~\eqref{eq:choice}.
\end{proof}

\begin{claim}
Put $\varepsilon_2:=-\frac{1}{t+2}\frac{\log|\lambda|}{\log c_2}$.  
Then $|L(\Lambda_n)| < H(\Lambda_n)^{-\varepsilon_2} |\Lambda_{n,3+2s}|$ for $n \in \mathbb N$ sufficiently large.
    \label{eq:claim2}
\end{claim}
\begin{proof}
We have $|\Lambda_{n,3+2s}|=|\lambda|^{m_{n,s}+2q_n}$ and,
by~\eqref{eq:Lsmall}, $|L(\Lambda_n)| < |\lambda|^{m_{n,t}+2q_n}$ for sufficiently large $n$.
Hence, in view of~\eqref{eq:HEIGHT}, it suffices to show that 
\[|\lambda|^{m_{n,t}+2q_n} \leq c_2^{-\varepsilon_2(m_{n,t}+2q_n)} |\lambda|^{m_{n,s}+2q_n}\, .\]
Taking logarithms, this is equivalent to 
\[
(m_{n,t}-m_{n,s}) \log |\lambda| \leq - \varepsilon_2 (m_{n,t}+2q_n) \log c_2 \, .
\]
Since $m_{n,t}\leq t q_{n+1}$ (by Proposition~\ref{prop:gap}(3)) and $q_n \leq q_{n+1}$, it suffices to show that 
\[ (m_{n,t}-m_{n,s}) \log |\lambda| \leq -\varepsilon_2 (t+2) q_{n+1} \log c_2 \, .
\]
From the definition of $\varepsilon_2$, it suffices to show that $m_{n,t}-m_{n,s} \geq q_{n+1}$.  But 
this holds for all sufficiently large $n \in \mathbb N$ by Proposition~\ref{prop:gap}(2), since $t=s+4$.
\end{proof}

Claims~\ref{claim:one} and~\ref{eq:claim2} show that, setting $\varepsilon:=\min(\varepsilon_1,\varepsilon_2)/2$,  
the tuple $\Lambda_n$ and linear form $L$ satisfy the hypotheses of Lemma~\ref{lem:s-unit-plus} for infinitely many $n$.
Here we reorder the \(S\)-unit coordinates of \(\Lambda_n\) such that  \(\Lambda_{n,3+2s}\) plays the role of \(x_d\) in Lemma~\ref{lem:s-unit-plus}.  Hence there exist $i,j\geq 1$ with 
$i\neq j$ and a finite set $U$ such that $\Lambda_{n,i}/\Lambda_{n,j} \in U$ for infinitely many $n$. 
Recalling that $\lim_{n\rightarrow\infty} q_n = \infty$ and $\lim_{n\rightarrow\infty} m_{n,j+1}-m_{n,j}=\infty$ for all $j\geq 1$, 
there exists a sequence $(r_n)_{n=0}^\infty$ of integers such that $\lim_{n\rightarrow\infty}|r_n|=\infty$
and for infinitely many $n$ we have
either $\Lambda_{n,i}/\Lambda_{n,j} = \pm (\lambda/\overline{\lambda})^{r_n}$ 
or  $|\Lambda_{n,i}/\Lambda_{n,j}|=|\lambda|^{r_n}$.
Since $|\lambda|<1$ and $\lambda/\overline{\lambda}$ is not a root of unity,
$\Lambda_{n,i}/\Lambda_{n,j}$ can only lie in $U$ for finitely many $n$.  This is a contradiction.
\end{proof}

\section{Conclusion}
\label{sec:conclusion}
Nearly linear recurrences are a generalisation of linear recurrences and 
a special case of linear constraint loops and of LTI systems.  
We have shown decidability of the Positivity Problem for NLRS of order 2.  This corresponds to a special case
of the non-reachability problem for LTI systems in the plane---namely the case with a one-dimensional control polytope
and with the reachability target being a halfspace. We believe that our approach can be generalised to halfspace non-reachability in the plane for LTI systems 
whose control sets are general polyhedra.  However, we expect that it will be difficult to handle more general targets (e.g., point targets)
because, in the general case, a separating hyperplane between the reachable set and the target may not have a rational or algebraic description.  
Deciding positivity of order-3 NLRS remains open; specifically, the case with 3 characteristic roots of the same modulus, greater than one, seems 
difficult to handle.  
On the other hand, under certain genericity assumptions, the transcendence result for order-2 LRS in Theorem~\ref{thm:main} can be generalised to real algebraic LRS
of arbitrary order that have two dominant characteristic roots (see Theorem~\ref{thm:extended}).

Positivity problems for NLRS come in several variants, according to whether initial values are specified and whether the objective is to determine  
the positivity of \emph{some} or \emph{all} sequences satisfying a given recurrence.  
While the arising problems are superficially similar,
these variants seem to be fundamentally different from each other.
In this paper we studied the \emph{initialised, universal} variant: 
determine the positivity of every sequence satisfying a given 
recurrence~\eqref{eq:nearly} and initial conditions.
This is equivalent to a non-reachability problem for the underlying
LTI system: namely the problem of whether for the LTI system~\eqref{eq:nearly-mat} the 
halfspace $H:=\{ \boldsymbol x \in \mathbb R^d : \boldsymbol e_d^\top
\boldsymbol x < 0\}$ is not reachable.

The \emph{initialised, existential} variant of the Positivity Problem
asks whether there \emph{exists} a positive  sequence satisfying a given
recurrence~\eqref{eq:nearly} with given initial conditions.  In terms of LTI systems,
this is a controlled-invariance problem: it asks whether there is a sequence of controls that
keeps the system~\eqref{eq:nearly-mat} in the halfspace $\{\boldsymbol x \in \mathbb R^d: \boldsymbol e_d^\top \boldsymbol x\geq 0\}$.  Critically for such
problems, the bang-bang principle no longer applies---one cannot
assume that the controls lie on the boundary of the control set.
It is the bang-bang principle that led us to study transcendence of the series in Theorem~\ref{thm:main} and thus 
it seems that different techniques would be 
needed for the existential variant.

The \emph{uninitialised, existential} variant of the Positivity Problem asks whether there exists a positive sequence satisfying a given nearly linear recurrence, without specifying the
initial values of the sequence.  This is a special case of the non-termination problem for linear constraint loops, which
is known to be decidable for loops in dimension two~\cite{GuilmantLO024}, but is open in dimension three.   Thus the latter variant of the Positivity Problem is decidable
for NLRS of order 2 and, we believe, open at order 3.  However 
the techniques of~\cite{GuilmantLO024}, which are based on convex analysis, are very different from the number-theoretic approach of the present paper.

\appendix
\section{Proof of Lemma~\ref{lem:s-unit-plus}}
\label{sec:proofs}
The following is a version of the $p$-adic Subspace Theorem of
Schlickewei~\cite{Schlickewei76} and is the main tool used to prove Lemma~\ref{lem:s-unit-plus}.
We state a simplified version of the theorem in which all but one of the linear forms are 
  coordinate forms.  Throughout this section, $\mathbb K$ is a number field,
  $S \subseteq M(\mathbb K)$ is a finite set
containing all the Archimedean places of $\mathbb K$, 
  and $H$ denotes the height function on $\mathbb K$ defined in Section~\ref{sec:sums}.
  
\begin{theorem}
Given  $d\geq 2$, let $L(X_1,\ldots,X_d)$ be a linear form with
  coefficients in $\mathbb K$.  Let $i_0 \in \{1,\ldots,d\}$ be a distinguished
  index such that $X_{i_0}$ has non-zero coefficient in $L$ and let $v_0 \in S$ be a distinguished place.  
  Then for
  every $c>0$ and every $\varepsilon>0$ the set of solutions
  $(x_1,\ldots,x_d) \in (\mathcal{O}_S)^d$ of the
  inequality
  \[ |L(x_1,\ldots,x_d)|_{v_0}\cdot  \prod_{\substack{(i,v)\in
        \{1,\ldots,d\}\times S\\(i,v) \neq (i_0,v_0)}}|x_i|_v \,
    \leq \, c H(x_1,\ldots,x_d)^{-\varepsilon} \] is contained in a finite union
  of proper linear subspaces of $\mathbb K^d$.
\label{thm:SUBSPACE}
\end{theorem}

We refer to~\cite[Section 4]{AdamczewskiBell21} for an 
account of the use of the Subspace Theorem in transcendence proofs in 
automata theory.  One of the central applications of the Subspace Theorem is in proving the following result 
about sums of $S$-units.

\begin{theorem}
 Let $v_0 \in S$.
  Given $d\geq 2$, $\alpha_1,\ldots,\alpha_d \in \mathbb K \setminus\{0\}$, $c>0$, and $\varepsilon>0$, there is a finite 
set  $U \subseteq \mathbb K$ such that for every solution of the inequality
\begin{gather*} | \alpha_1 x_1 + \cdots + \alpha_dx_d|_{v_0} < c\, H(x_1,\ldots,x_d)^{-\varepsilon} \max(|x_1|_{v_0},\ldots,|x_d|_{v_0}) 
\end{gather*}
in $S$-units $x_1,\ldots,x_d$, we have $x_i/x_j \in U$ for some $i\neq j$.
    \label{thm:s-units}
\end{theorem}

We will use Theorem~\ref{thm:s-units} as an ingredient in the proof of Lemma~\ref{lem:s-unit-plus}.
  
\begin{proof}[Proof of Lemma~\ref{lem:s-unit-plus}]
Recall that our objective is to show the existence of a finite set $U\subseteq \mathbb K$ such that 
for every solution of the inequalities
\begin{eqnarray}
\left|\alpha_0x_0+\cdots +\alpha_dx_d\right|_{v_0} &< & H(x_1,\ldots,x_d)^{-\varepsilon} \, |x_d|_{v_0}  \label{eq:s-unit-proof} \\
|x_d|_{v_0}  \prod_{v\in S\setminus \{v_0\}}|x_0|_v  & < & H(x_0,\ldots,x_{d-1})^{-\varepsilon}  \label{eq:if-equal}
\end{eqnarray}
in $S$-units $x_1,\ldots,x_d$ and $x_0 \in \mathcal O_S\setminus \{0\}$, we have $x_i/x_j\in U$ for some $i\neq j$ in $\{1,\ldots,d\}$.
To this end, consider the linear form \[ L(X_0,\ldots,X_{d-1}) := \sum_{i=0}^{d-1}\alpha_i X_i \, .\] By~\eqref{eq:s-unit-proof}, putting $c:=1+|\alpha_d|_{v_0}$, we have 
\[ |L(x_0,\ldots,x_{d-1})|_{v_0} \leq \left|\alpha_0x_0 +\cdots + \alpha_dx_d\right|_{v_0}  +\left| \alpha_d x_d \right|_{v_0} \leq c\, |x_d|_{v_0} \, .\] Since $x_1,\ldots,x_{d-1}$ are $S$-units, by the product formula and~\eqref{eq:if-equal} we have 
\begin{gather}
|L(x_0,\ldots,x_{d-1})|_{v_0} \cdot \prod_{v\in S \setminus \{v_0\}} |x_0|_v \cdot \prod_{i=1}^{d-1} \prod_{v\in S} |x_i|_v \leq c\, H(x_0,\ldots,x_{d-1})^{-\varepsilon}\, . 
\label{eq:first-sub}
\end{gather}

Applying Theorem~\ref{thm:SUBSPACE} to the form $L$, with distinguished variable $X_0$ and distinguished place $v_0$, the set of solutions $(x_0,\ldots,x_{d-1})$ of~\eqref{eq:first-sub} is contained in finitely many proper subspaces of $\mathbb K^d$.
Since every proper subspace is contained in a hyperplane,
we obtain a finite family $L_j(X_0,\ldots,X_{d-1})$, $j \in J$, of non-zero linear forms with coefficients in $\mathbb K$
such that every solution of the inequalities~\eqref{eq:s-unit-proof} and~\eqref{eq:if-equal} satisfies $L_j(x_0,\ldots,x_{d-1})=0$ for some $j \in J$.

Let $j \in J$ and consider the form $L_j$.  We consider two cases, according to whether or not $X_0$ lies in the support of $L_j$.  If not, then the equation 
$L_j(x_0,\ldots,x_{d-1})=0$ involves only $S$-units.  In this case the support of $L_j$ must include at least two variables, so
by Theorem~\ref{thm:s-units} there exists a finite set $U_j$ such that for every solution of this equation
we have $x_q/x_p \in U_j$ for some $q\neq p$ in $\{1,\ldots,d-1\}$.
Suppose, on the other hand, that $X_0$ lies in the support of $L_j$.
Note that $L_j$ is not a scalar multiple of $L'(X_0,\ldots,X_d):=\sum_{i=0}^d \alpha_i X_i$ since $X_d$ does not appear in $L_j$.
Thus, by taking a suitable linear combination of $L_j$ and $L'$, we obtain a linear form $L'_j(X_1,\ldots,X_d)$ which mentions $X_d$ but not $X_0$, and
such that for each solution of~\eqref{eq:s-unit-proof} and~\eqref{eq:if-equal} such that
$L_j(x_0,\ldots,x_{d-1})=0$ we have
\begin{gather}
|L_j'(x_1,\ldots,x_d)|_{v_0} <  H(x_1,\ldots,x_d)^{-\varepsilon} \, |x_d|_{v_0} \, .
\label{eq:above}
\end{gather}

The above is an inequality in $S$-units.  If the support of $L'_j$ includes at least two variables,  then
applying Theorem~\ref{thm:s-units} to the variables in the support of $L'_j$, there exists a finite set $U_j$ such that for every solution of~\eqref{eq:above}
we have $x_q/x_p \in U_j$ for some $q\neq p$ in $\{1,\ldots,d\}$.  Otherwise, if $L'_j$ has the form $\beta X_d$
for some $\beta\in\mathbb K \setminus\{0\}$, then~\eqref{eq:above} has the form $|\beta x_d|_{v_0} \leq H(x_1,\ldots,x_d)^{-\varepsilon}|x_d|_{v_0}$.
But this gives $H(x_1,\ldots,x_d) \leq |\beta|_{v_0}^{-1/\varepsilon}$.  By Northcott's Theorem, 
there are only finitely many projective points in $\mathbb P^{d-1}(\mathbb K)$ of bounded height.
Hence we can take $U_j$ to be the finite set of all quotients $x_p/x_q$, for $p\neq q$, over all such points.

To conclude we take the desired set $U$ to be the union of the sets $U_j$ for each $j \in J$.
\end{proof}

\section{Higher Order Recurrences}
In this section we prove the following generalisation of Theorem~\ref{thm:main} from order-two LRS to simple LRS with two dominant roots.

\begin{theorem}
    Let $\boldsymbol u = (u_m)_{m=0}^\infty$ be an LRS of real algebraic numbers with  exactly two dominant characteristic roots, of absolute value strictly less than one.
    Suppose also that the characteristic roots are simple and multiplicatively independent.
Then $\sum_{m=0}^\infty |u_m|$ is transcendental.
\label{thm:extended}
\end{theorem}

Let the characteristic roots of $\boldsymbol u$ be $\lambda_1,\ldots,\lambda_d$, for some $d\geq 2$.
We suppose that
$\lambda_1$ and $\lambda_2$ are strictly dominant---say that
$|\lambda_1|=|\lambda_2|>|\lambda_3| \geq \cdots \ge |\lambda_d|$. 
Then for some non-zero 
algebraic numbers $b_1,\ldots,b_d$ we have $u_m = \sum_{i=1}^d b_i \lambda_i^m$ for all $m\in \mathbb N$.

Since $u_m=\overline{u_m}$ for all $m\in\mathbb N$, by uniqueness of the exponential-sum representation of LRS, 
we have $\overline{\lambda_1}=\lambda_2$ and $\overline{b_1}=b_2$.  By the assumption of multiplicative independence, $\lambda_1/\lambda_2$ is not a root of unity.
We denote by 
\[ u^{(\mathrm{dom})}_m := b_1\lambda_1^m+b_2\lambda_2^m\] 
the sequence of dominant terms of $\boldsymbol u$. We claim that 
for all but finitely many $m\in\mathbb N$ we have $\mathrm{sign}(u_m) = \mathrm{sign}(u^{(\mathrm{dom})}_m)$. 
If $d=2$, then $u_m=u_m^{(dom)}$ for all $m$ and there is nothing to prove.  If $d \geq 3$, then
fix $\varepsilon>0$ sufficiently small that $|\lambda_1|^{(1+\varepsilon)}>|\lambda_3|$.  By~\cite[Corollary 2]{MignotteShoreyTijdeman1984} (or by a standard application of
Theorem~\ref{thm:s-units}) we have
\[ |b_1\lambda_1^m + b_2\lambda_2^m| > |\lambda_1|^{(1+\varepsilon)m}\]
for all but finitely many $m\in\mathbb N$.  It follows that $\left|u_m^{(\mathrm{dom})}\right| > \left|\sum_{i=3}^d b_i\lambda_i^m\right|$ for all but finitely  many $m\in\mathbb N$, which establishes the claim.
By replacing the sequence $(u_m)_{m=0}^\infty$ by a tail thereof (which does not affect the transcendence of $\sum_{m=0}^\infty |u_m|$) we henceforth
assume that $u_m$ and $u_m^{(\mathrm{dom})}$ have the same sign for all $m\in \mathbb N$.

The proof  of Theorem~\ref{thm:extended} follows the structure of the proof of Theorem~\ref{thm:main}, albeit with several extra technical obstacles to overcome.
In Subsection~\ref{subsec:approx}, using the theory of continued fractions, we identify a family of recurrences that are 
satisfied by $\boldsymbol u$ and almost satisfied by its corresponding sequence of absolute values $(|u_m|)_{m=0}^\infty$.
In Subsection~\ref{subsec:newmain} we use the family of recurrences to produce a linear form, 
whose coefficients include $\alpha:=\sum_{m=0}^\infty |u_m|$, that takes particularly small values at certain algebraic points.  By applying Lemma~\ref{lem:s-unit-plus} we  conclude that $\alpha$ cannot be algebraic.  

\subsection{Recurrences for the Absolute-Value Sequence}
\label{subsec:approx}
Recall the closed form $u_m =\sum_{i=1}^d b_i \lambda_i^m$.
Write $\lambda_1 = |\lambda_1|\exp(2\pi i \theta)$, where $\theta \in (0,1)$ is irrational, and
$b_1=|b_1|\exp(2\pi i \psi)$, where $\psi\in [0,1)$.  
Let $(q_n)_{n=0}^\infty$ be the sequence of denominators of convergents of the simple
continued fraction expansion of $\theta$.  
We claim that for every $n\in\mathbb N$ the sequence $\boldsymbol u$ satisfies the recurrence
\begin{gather}
a_{n,0} u_{m+dq_n} + a_{n,1} u_{m+(d-1)q_n} + \cdots + a_{n,d}u_m = 0 \,  \qquad (m \in \mathbb N) \, ,
    \label{eq:RECUR2}
\end{gather}
with coefficients 
\begin{gather} a_{n,i}:=(-1)^{i}s_i(\lambda_1^{q_n},\ldots,\lambda_d^{q_n}),\quad i \in \{0,\ldots,d\} \, , 
\label{eq:symm}
\end{gather}
where $s_i(X_1,\ldots,X_d)$ denotes the $i$-th elementary symmetric polynomial in $d$ variables.
This is so because the characteristic 
polynomial of the recurrence~\eqref{eq:RECUR2} is $\prod_{i=1}^d (X^{q_n}-\lambda_i^{q_n})$, which is a multiple of the characteristic polynomial
$\prod_{i=1}^d (X-\lambda_i)$ of the minimal recurrence satisfied by $\boldsymbol u$.

We now consider for which $m,n\in \mathbb N$ the sequence of absolute values $(|u_m|)_{m=0}^\infty$ fails to satisfy the recurrence~\eqref{eq:RECUR2}.
To this end, for all $m,n \in \mathbb N$ we define 
\begin{gather}
 w_{n,m} :=  a_{n,0}|u_{m+dq_n}| + a_{n,1} |u_{m+(d-1)q_n}|+ \cdots + a_{n,d}|u_m| 
 \label{eq:def-w2}
 \end{gather}
and write $\Delta_n:=\{ m \in \mathbb N:w_{n,m}\neq 0\}$. 

Let $N_0\in \mathbb N$ be such that $|\varepsilon_n|<\frac{1}{4d}$ for all $n\geq N_0$.
For $n\geq N_0$, define $J_n\subseteq (0,1)$ by

\[ J_n:= \begin{cases}
    \left(\frac{1}{4}-d\varepsilon_n,\frac{1}{4}\right)\cup\left(\frac{3}{4}-d\varepsilon_n,\frac{3}{4} \right) & \varepsilon_n>0\\
     \left(\frac{1}{4},\frac{1}{4}-d\varepsilon_n\right)\cup\left(\frac{3}{4},\frac{3}{4}-d\varepsilon_n \right) & \varepsilon_n<0 \, .
\end{cases}\]
We will characterise  $\Delta_n$ in terms of the hitting times of $J_n$ under 
the orbit $\{m\theta+\psi\}$.   To this end, define
$\Delta'_n := \{ m \in \mathbb N : \{ m\theta+\psi \} \in J_n\}$.
 Our goal is to show that $\Delta_n=\Delta'_n$ for all sufficiently large $n$.

We start with the following result (an analogue of Proposition~\ref{prop:gap}) that characterises the gaps between successive elements of $\Delta'_n$.
\begin{proposition}
For $n\geq N_0$, let $m_{n,1} < m_{n,2} < \cdots$ be an increasing enumeration of 
$\Delta'_n$.  Then
\begin{enumerate}
    \item  for all $j \geq 0$ we have $\lim_{n\rightarrow \infty}(m_{n,j+1}-m_{n,j})=\infty$, where we define  $m_{n,0}:=0$;
     \item for all $j\geq 1$  
    it holds that  $m_{n,j+2d}-m_{n,j}  \geq {q_{n+1}}$; 
    \item for all $j\geq 1$  
    we have $\left\lfloor\frac{j-1}{2d} \right\rfloor q_{n+1} \leq m_{n,j} \leq j \, q_{n+1}$.
\end{enumerate}
\label{prop:gap2}
\end{proposition}
\begin{proof}
By definition of $\Delta'_n$, if $m \in \Delta'_n$ then
$\left\| m\theta+ \psi - \frac{1}{4}\right\| \leq d|\varepsilon_n|$ or
$\left\| m \theta +\psi  - \frac{3}{4}\right\| \leq d|\varepsilon_n|$.
In particular, $\| 4m_{n,1} \theta + 4\psi \| \leq 4d|\varepsilon_n|$ and, for all $j\geq 1$,
$\| 4(m_{n,j+1}-m_{n,j})\theta\| \leq 8d|\varepsilon_n|$.
Since $\varepsilon_n$ converges to $0$ as $n$ tends to infinity, it follows that $\lim_{n\rightarrow \infty} (m_{n,j+1}-m_{n,j})=\infty$ for all $j\geq 0$, establishing Item~{\bf 1}.

Next we show Item {\bf 2}.  By the pigeonhole principle, every
$2d+1$ consecutive visits of the orbit $\{m\theta+\psi\}$ to $J_n$ contain two points at distance strictly less than $|\varepsilon_n|$.
By the law of best approximation the number of applications of $R_\theta$ between these two points is at least $q_{n+1}$.

Finally we turn to Item {\bf 3}.  The return-time bound in Proposition~\ref{prop:return} shows that $m_{n,j+1}-m_{n,j} \leq q_{n+1}$ for all $j\geq 1$ and $n$.
By running the orbit backwards from its starting point $\psi$, the proposition also shows that $m_{n,1} \leq q_{n+1}$.  We conclude that
$m_{n,j} \leq j q_{n+1}$ for all $j\geq 1$.
The lower bound $m_{n,j} \geq \left\lfloor\frac{j-1}{2d} \right\rfloor q_{n+1}$ follows from Item {\bf 2}.
\end{proof}

The rest of this section concerns the relationship between $\Delta_n$ and $\Delta'_n$.  The inclusion 
$\Delta_n \subseteq \Delta'_n$ is straightforward (see Proposition~\ref{prop:Delta2}).  The converse requires showing that $w_{n,m}\neq0$ for all 
$m \in \Delta'_n$ and sufficiently large $n$.  To this end, after expanding $w_{n,m}$ as a sum of monomials, the key issue is to show that these monomials do not cancel out.
We start with the following proposition:

\begin{proposition}
    Fix  $k \in \{0,\ldots,d-1\}$.  For all $m,n \in \mathbb N$, write
    \[ v_{n,m,k}:= a_{n,d-k} u_{m+kq_n}+ \cdots + a_{n,d-1} u_{m+q_n}+ a_{n,d}u_m \, . \]
Then for all $m,n$ we have 
\[ v_{n,m,k} = \sum_{j=1}^d \sum_{i \in I_j} c_{i,j}\lambda_{j}^{m}\gamma_i^{q_n},
\]
where each \(c_{i,j}\) is a non-zero algebraic number, each \(\gamma_i\) is a monomial in \(\lambda_1,\ldots,\lambda_d\) of total degree $d$, 
and each $I_j$ is a non-empty finite set indexing pairwise distinct  monomials.
\label{prop:non-vanish}
\end{proposition}
    \begin{proof}
Expanding the expression for $v_{n,m,k}$ using the exponential-sum representation of $\boldsymbol u$, we have that for all $m,n\in \mathbb N$,
\begin{gather}
v_{n,m,k}
=
\sum_{i=0}^k a_{n,d-i}u_{m+iq_n}  
=
\sum_{i=0}^k a_{n,d-i}
\sum_{j=1}^d b_j\lambda_j^{m+iq_n}  
=
\sum_{j=1}^d
b_j \lambda_j^m
\left(\sum_{i=0}^k a_{n,d-i}\lambda_j^{iq_n}\right) .
\label{eq:expand}
\end{gather}

For each $j\in\{1,\ldots,d\}$,
the expression in parentheses on  the right-hand side of~\eqref{eq:expand} can be expanded as
\begin{gather}
\sum_{i=0}^k a_{n,d-i}\lambda_j^{iq_n} =  \sum_{i=0}^k
(-1)^{d-i} 
s_{d-i}(\lambda_1^{q_n},\ldots,\lambda_d^{q_n}) \lambda_j^{iq_n} \, .
    \label{eq:expand-more}
\end{gather}
The above is a sum of monomials in $\lambda^{q_n}_1,\ldots,\lambda^{q_n}_d$, each of total degree $d$.
To establish the non-emptiness of the index set $I_j$, we argue 
that~\eqref{eq:expand-more} is not identically zero as an exponential sum in \(q_n\). Indeed, the
summand with index \(i=k\) has the form $A\lambda_j^{(k+1)q_n}+B\lambda_j^{q_nk}$ where $A$ and $B$ are polynomials in 
 $\lambda^{q_n}_1,\ldots,\lambda^{q_n}_d$ that do not mention $\lambda_j$ and $A$ is non-zero. Moreover in each summand with index \(i<k\) the
exponent of \(\lambda_j\) is at most \(kq_n\). 
Hence, by multiplicative independence of $\lambda_1,\ldots,\lambda_d$, none of the monomials in $A\lambda_j^{(k+1)q_n}$ appear
in other summands.  We conclude that~\eqref{eq:expand-more} is
non-zero as an exponential sum in $q_n$.
\end{proof}

\begin{proposition}
  Let $c_0,c_1\in \mathbb Q$, with $c_0 \geq 0$.
  Then there are at most finitely many
  $n \in \mathbb N$ such that $\{ (c_0q_n+c_1)\theta + \psi \} \in J_n$. 
\label{prop:no-lines}
\end{proposition}
\begin{proof}
  Suppose for a contradiction that  $\{ (c_0q_n+c_1)\theta + \psi \} \in J_n$
  for infinitely many $n\in \mathbb N$.  

  For all $n\in \mathbb N$ we have $q_n\theta=p_n+\varepsilon_n$.  Hence
  \[\{ (c_0q_n+c_1)\theta + \psi \} = \{ c_0p_n + c_0\varepsilon_n+
    c_1\theta+\psi\}\, .\]
  Now $c_1\theta+\psi$ is constant, independent of $n$, while
  $c_0p_n$ takes finitely many values modulo one as $n$ varies.
  By passing to an infinite subset of $n$,
  we may assume that $c_0p_n$ is constant modulo one.  Thus we have
  \begin{gather}
    \{ (c_0q_n+c_1)\theta+\psi \} = \{ c_0\varepsilon_n + \varphi \}
\label{eq:help}
  \end{gather}
  for some constant $\varphi \in [0,1)$ and infinitely many $n$.

  The left-hand expression in~\eqref{eq:help} is, by assumption, an element of
  $J_n$, while the right-hand side of the equation 
  converges to $\varphi$ as $n\rightarrow \infty$.   
  Since the two components of \(J_n\) respectively shrink to the
points \(1/4\) and \(3/4\),
  we must have $\varphi \in \{1/4,3/4\}$. 
  
  Suppose $\varepsilon_n>0$. Then \eqref{eq:help} shows that $\{ (c_0q_n+c_1)\theta+\psi \} \geq \varphi$ for all sufficiently large $n$.
  On the other hand,  $1/4$ and $3/4$ are respectively \emph{right} endpoints
  of the two components of $J_n$.  Since $J_n$ is open, the previous two facts are in contradiction.
  The argument in the case that  $\varepsilon_n<0$ is symmetric.
  \end{proof}

\begin{proposition}
  Let $\boldsymbol x,\boldsymbol y \in \mathbb Z^d$, $i_1,i_2 \in \{1,\ldots,d\}$, and $u \in \overline{\mathbb Q}$.  Then
\begin{enumerate}
    \item there are finitely many $(m_1,n)\in \mathbb N^2$ such that $m_1\in\Delta'_n$ and 
    \begin{gather}\frac{\lambda_{i_1}^{m_1}(\lambda_1^{x_1}\cdots \lambda_d^{x_d})^{q_n}}
  {(\lambda_1^{y_1}\cdots \lambda_d^{y_d})^{q_n}} = u\, ;
  \label{eq:mult-relation2}
  \end{gather}
   \item supposing additionally that  $\sum_{i=1}^d (x_i -y_i)=0$ and $(i_1,\boldsymbol x)\neq (i_2,\boldsymbol y)$, there are 
   finitely many $(m_1,m_2,n)\in \mathbb N^3$ such that $m_1, m_2 \in \Delta'_n$ and
\begin{gather}
  \frac{\lambda_{i_1}^{m_1}(\lambda_1^{x_1}\cdots \lambda_d^{x_d})^{q_n}}
  {\lambda_{i_2}^{m_2}(\lambda_1^{y_1}\cdots \lambda_d^{y_d})^{q_n}} = u \, . 
  \label{eq:mult-relation}
\end{gather}
\end{enumerate}  
\label{prop:not-cancel}
\end{proposition}
\begin{proof}
We may assume that $u=\lambda_1^{z_1}\cdots \lambda_d^{z_d}$ for some $z_1,\ldots,z_d\in\mathbb Z$, 
for otherwise neither~\eqref{eq:mult-relation2} nor~\eqref{eq:mult-relation} have any solutions.  

We first show {\bf 1.}
By multiplicative independence of $\lambda_1,\ldots,\lambda_d$,~\eqref{eq:mult-relation2} entails that $m_1 + q_n(x_{i_1}-y_{i_1})=z_{i_1}$.
If $x_{i_1}-y_{i_1}>0$ then this equation has finitely many solutions 
 $n\in \mathbb N$ and $m_1 \in \Delta'_n$ by boundedness of the right-hand side.  
 If $x_{i_1}-y_{i_1}\leq 0$, then the equation has finitely many solutions by
 Proposition~\ref{prop:no-lines}.   

Turning to {\bf 2.}, Equation~\eqref{eq:mult-relation} is equivalent to the vector equation
  \begin{gather}
    m_1 \boldsymbol e_{i_1} -
    m_2 \boldsymbol e_{i_2} + q_n(\boldsymbol x-\boldsymbol y)
    = \boldsymbol z \, .
    \label{eq:lin-mult-relation}
    \end{gather}
    From~\eqref{eq:lin-mult-relation}, if $i \not\in \{i_1,i_2\}$ then
    $q_n(x_i-y_i)= z_i$.  In the case that $x_i\neq y_i$ there are only finitely many $n$ for
    which this equation holds.  Thus we may assume that $x_i=y_i$ for all
    $i \not\in \{i_1,i_2\}$. 
    Suppose for a contradiction that \(i_1=i_2\).  
    Then the condition \(\sum_i(x_i-y_i)=0\) also gives \(x_{i_1}=y_{i_1}\), hence \(\boldsymbol x =\boldsymbol y\), 
    contradicting \((i_1,\boldsymbol x)\ne(i_2,\boldsymbol y)\). Therefore \(i_1\ne i_2\).
   Returning to Equation~\eqref{eq:lin-mult-relation}, it follows that $m_1+q_n(x_{i_1}-y_{i_1}) = z_{i_1}$
   and $-m_2+q_n(x_{i_2}-y_{i_2})=z_{i_2}$.  If $x_{i_1}-y_{i_1}>0$ (equivalently $x_{i_2}-y_{i_2}<0$), these equations 
   have finitely many solutions since the conditions $m_1,m_2\geq 0$ bound $n$.  Otherwise, if $x_{i_1}-y_{i_1}\leq 0$ (equivalently $x_{i_2}-y_{i_2}\geq 0$),
   there are finitely finitely many solutions $n \in \mathbb N$ and $m_1,m_2\in \Delta_n'$ by Proposition~\ref{prop:no-lines}.
\end{proof}

\begin{proposition}
For all sufficiently large $n$, 
Items {\bf 1} and {\bf 2}, below, are equivalent, and they imply Item {\bf 3}.
    \begin{enumerate}
        \item $m \in \Delta_n$;
        \item $m \in \Delta_n'$;
        \item $w_{n,m} \in \left\{ \pm 2 v_{n,m,k} : k \in \{0,\ldots,d-1\} \right\}$.
            \end{enumerate}
    \label{prop:Delta2}
\end{proposition}
\begin{proof}
Throughout the proof we assume that $n\geq N_0$, so that $|\varepsilon_n|<\frac{1}{4d}$.

We first show that {\bf 1} implies {\bf 2}.
Assume $m \in \Delta_n$, that is, $w_{n,m}\neq 0$.  Then the respective signs of $u_m,u_{m+q_n},\ldots,u_{m+dq_n}$ 
are not all the same, for otherwise by~\eqref{eq:RECUR2}
 we would have $w_{n,m}=0$.  It follows that
$u^{(\mathrm{dom})}_{m},u^{(\mathrm{dom})}_{m+q_n},\ldots,u^{(\mathrm{dom})}_{m+dq_n}$ do not all have the same sign either.
Since $u^{(\mathrm{dom})}_m = |2b_1\lambda_1^m|\cos(2\pi(m\theta+\psi))$, we have $u^{(\mathrm{dom})}_m<0$ 
if and only if $\{ m\theta+\psi \} \in (1/4,3/4)$.  It follows that at least one but not all of the elements of the list
\[ \{ m\theta+\psi\} , \{(m+q_n)\theta +\psi\} , \ldots , \{(m+dq_n)\theta+\psi \}\]
lie in 
the interval $(1/4,3/4)$.
Since $|\varepsilon_n|<\frac{1}{2d}$, we conclude 
that $\{m\theta+\psi\} \in J_n$---that is, $m \in \Delta'_n$.

We next show that {\bf 2} implies {\bf 3}.
Suppose that $m\in\Delta'_n$, that is, $\{m\theta+\psi\} \in J_n$.  
Since $|\varepsilon_n| < \frac{1}{2d}$, the tuple $(u^{(\mathrm{dom})}_m,u^{(\mathrm{dom})}_{m+q_n},\ldots,u^{(\mathrm{dom})}_{m+dq_n})$ contains exactly one sign change. It follows that the tuple 
$(u_m,u_{m+q_n},\ldots,u_{m+dq_n})$ also contains exactly one sign change.
 Combined with Equation~\eqref{eq:RECUR2}, this entails that $w_{n,m}$ has the form indicated in Item~{\bf 3}.

Finally, we show that {\bf 2} implies {\bf 1}. 
Suppose that \( m \in \Delta'_n\). 
By the preceding paragraph we have
$w_{n,m}=\pm 2v_{n,m,k}$
for some \(k\in\{0,\ldots,d-1\}\). We claim that for all \(k\in\{0,\ldots,d-1\}\) it holds that
\(v_{n,m,k}\ne0\) for all but finitely many pairs \((m,n) \in \mathbb N^2\) such that $m \in \Delta'_n$.

The proof of the claim is as follows.
By Proposition~\ref{prop:non-vanish}, we may write
\(
v_{n,m,k}= \sum_{j=1}^d \sum_{i \in I_j} c_{i,j}\lambda_{j}^{m}\gamma_i^{q_n},
\)
where each index set \(I_j\) is finite and non-empty, the \(c_{i,j}\) are non-zero algebraic numbers, and each \(\gamma_i\) 
is a monomial in \(\lambda_1,\ldots,\lambda_d\) of total degree \(d\). If this expression vanished for infinitely many pairs \((m,n)\) with \(m\in\Delta'_n\), then 
Theorem~\ref{thm:s-units} (applied to a number field $\mathbb K$ and set of places $S$ such that 
$\lambda_1,\ldots,\lambda_d$ are all $S$-units of $\mathbb K$)
would yield distinct pairs $(i_1,j_1)$ and $(i_2,j_2)$
and a finite set \(U\subseteq \overline{\mathbb Q}\) such that
\(
{\lambda_{j_1}^{m}\gamma_{i_1}^{q_n}}/
{\lambda_{j_2}^{m}\gamma_{i_2}^{q_n}}
\in U
\)
for infinitely many such pairs. Since $U$ is finite, it follows from Proposition~\ref{prop:not-cancel} that such a constraint has only finitely many solutions $(m,n)$, which gives a contradiction.  The claim is proved and 
we conclude that \(w_{n,m}\ne 0\), i.e., \(m\in\Delta_n\).
\end{proof}

\subsection{Application of the Subspace Theorem}
\label{subsec:newmain}
In this section we conclude the proof of Theorem~\ref{thm:extended}.  
The argument has the same basic structure as in Section~\ref{sec:partII}.  

Our task is to show that
$\alpha := \sum_{m=0}^\infty |u_m|$ is transcendental.  
We proceed by assuming that $\alpha$ is algebraic and aim for a contradiction using Lemma~\ref{lem:s-unit-plus}. 
To this end,
let $\mathbb K$ be the subfield of $\mathbb C$ generated over $\mathbb Q$ by the elements $\{\alpha,\lambda_1,\ldots,\lambda_d,b_1,\ldots,b_d\}$,  
and let $S \subseteq M(\mathbb K)$  be a finite set of places of $\mathbb K$ that contains all Archimedean places 
such that $b_1,\ldots,b_d$ are $S$-integers and $\lambda_1,\ldots,\lambda_d$ are $S$-units of $\mathbb K$.
Let $v_0 \in M(\mathbb K)$ denote the place corresponding 
to the standard absolute value $|\cdot|$ on $\mathbb K$ (we regard $\mathbb K$ as a subfield of $\mathbb C$).

For all $n\in\mathbb N$ we have the following equation, where $\nu_n$ is defined to be the underbraced finite sum:
\begin{eqnarray*}
\sum_{j=0}^d a_{n,d-j} \alpha &=& \underbrace{\sum_{j=0}^d \sum_{m=0}^{jq_n-1} a_{n,d-j} |u_m|}_{\nu_n} + \sum_{m=0}^\infty\sum_{j=0}^d a_{n,d-j} |u_{m+jq_n}| \\    
&=& \nu_n + \sum_{m=0}^\infty w_{n,m}\\
&=& \nu_n + \sum_{j=1}^\infty w_{n,m_{n,j}} \, .
\end{eqnarray*}
We truncate the infinite sum after the first $t$ terms, where $t$ will be specified later.   The summand satisfies the bound
$|w_{n,m_{n,j}}| \ll |\lambda_1|^{m_{n,j}+dq_n} \leq |\lambda_1|^{m_{n,t+1}+dq_n} |\lambda_1|^{j-t-1}$ for all $j\geq t+1$,  Since also
$\lim_{n\rightarrow \infty} (m_{n,{t+1}}-m_{n,t})=\infty$, for sufficiently large $n\in \mathbb N$ we obtain
\begin{equation}
    \begin{aligned}
      \left |\nu_n-\sum_{j=0}^d a_{n,d-j}\alpha+ \sum_{j=1}^t w_{n,m_{n,j}}\right| &\,= \, \left| \sum_{j=t+1}^\infty w_{n,m_{n,j}}\right|\\
       & \, < \, |\lambda_1|^{m_{n,t}+dq_n} \, .
    \end{aligned}
\label{eq:small2}
\end{equation}

By Propositions~\ref{prop:non-vanish} and~\ref{prop:Delta2}(3) we can rewrite each term $w_{n,m_{n,j}}$ in~\eqref{eq:small2} as a sum of    
monomials in $\lambda_1,\ldots,\lambda_d$; we can likewise expand each $a_{n,j}$ from its definition~\eqref{eq:symm}. 
After passing to an infinite subsequence, if necessary, we can 
write the left-hand side
of~\eqref{eq:small2} as a fixed linear combination of $\nu_n$ and of monomials  in $\lambda_1,\ldots,\lambda_d$, as follows.
Below,  $c_i$ and $c_{i,j,k}$
are constants in $\mathbb K$ and, for $e =\binom{2d}{d}$, $\gamma_1,\ldots,\gamma_e$ is an enumeration of the monomials in $\lambda_1,\ldots,\lambda_d$ of degree at most $d$:
\begin{gather}
\left| \nu_n + \sum_{i=1}^e c_i \gamma_i^{q_n} +  \sum_{j=1}^t\sum_{i=1}^d \sum_{k=1}^e c_{i,j,k} \lambda_i^{m_{n,j}} \gamma_k^{q_n} \right|
\,  < \, |\lambda_1|^{m_{n,t}+dq_n}
    \label{eq:S-unit-form}
\end{gather}

We henceforth restrict $n$ to the above-mentioned infinite subsequence and write~\eqref{eq:S-unit-form} as
\[|L(\Lambda_n)|  < |\lambda_1|^{m_{n,t}+dq_n} \, , \]
where $\Lambda_n$ is the tuple consisting of
\[
\nu_n,
\qquad
\gamma_i^{q_n}\quad(1\le i\le e),
\qquad
\lambda_i^{m_{n,j}}\gamma_k^{q_n}
\quad(1\le j\le t,\ 1\le i\le d,\ 1\le k\le e) \, ,
\]
omitting coordinates whose coefficients in~\eqref{eq:S-unit-form} are zero, and $L$ is the corresponding linear form whose coefficients are the constants appearing in~\eqref{eq:S-unit-form}. 

 Note that $\nu_n \in \mathcal O_S$. We also have $\nu_n\neq 0$ for all sufficiently large $n$.  Indeed, inspecting~\eqref{eq:small2}, since $a_{n,0}=1$ and $\lim_{n\rightarrow\infty} a_{n,j}=0$ for $j\geq 1$, we have $\lim_{n\rightarrow\infty} \nu_n=\alpha$.   Furthermore,
elementary calculations using the triangle inequality show that there is a constant $c_1>1$ such that for all $n\in\mathbb N$,
\begin{gather}
\prod_{v\in S \setminus \{v_0\} } |\nu_n|_v \leq c_1^{q_n} \,  .
\label{eq:BOUND2B}
\end{gather}
Likewise, there is a constant $c_2 >1$ such that for all $n\in\mathbb N$,
\begin{gather}
H(\Lambda_n) \leq c_2^{m_{n,t}+dq_n} \, . 
\label{eq:HEIGHTB}
\end{gather}

We are ready to specify $t$.
Write \(A:=\left\lceil\frac{\log|\lambda_d|}{\log|\lambda_1|}-1\right\rceil\geq 0\) and \(B:=\frac{\log c_1}{\log|\lambda_1|}<0\).
We define $t:=s+2d(1+Ad)$, where $s \in \mathbb N$ is sufficiently large such that 
\begin{gather}
{\left\lfloor\frac{s-1}{2d}\right\rfloor+B}
\ge
{\frac{s+3d+2Ad^2}{4d}} = {\frac{t+d}{4d}} \,  .
\label{eq:choiceB}
\end{gather}
By Proposition~\ref{prop:non-vanish}, in addition to $\nu_n$, the tuple $\Lambda_n$ contains an $S$-unit coordinate
$\Theta_n:=\lambda_1^{m_{n,s}}\gamma^{q_n}$ 
for some degree-$d$ monomial $\gamma$ in $\lambda_1,\ldots,\lambda_d$.  These two coordinates will be the distinguished entries of $\Lambda_n$ in our subsequent application of Lemma~\ref{lem:s-unit-plus}.
The following two claims set up the application of the lemma.

\begin{claim}
\label{claim:oneB}
Put
\(
\varepsilon_1:=
-\frac{1}{4d}\frac{\log |\lambda_1|}{\log c_2}>0
\).
Then
\(
|\Theta_n| \cdot 
\prod_{v\in S\setminus\{v_0\}}|\nu_n|_v
\le
H(\Lambda_n)^{-\varepsilon_1}.
\)
\end{claim}
\begin{proof}
We have \(
|\Theta_n| \leq |\lambda_1|^{m_{n,s}+dq_n}.
\)
Thus, by~\eqref{eq:BOUND2B} and~\eqref{eq:HEIGHTB}, to prove the claim it suffices to show that
\[
|\lambda_1|^{m_{n,s}+dq_n}c_1^{q_n}
\le
c_2^{-\varepsilon_1(m_{n,t}+dq_n)}\, .
\]
Taking logarithms and using the definition of \(\varepsilon_1\) and $B$, this is equivalent to
\begin{gather}
m_{n,s}+dq_n+Bq_n
\ge
\frac{1}{4d}(m_{n,t}+dq_n) \, .
\label{eq:desiredB}
\end{gather}
Recall that \(q_n\le q_{n+1}\) and \(B<0\).  Since, by Proposition~\ref{prop:gap2}(3), we also have
$m_{n,s}\ge \left\lfloor\frac{s-1}{2d}\right\rfloor q_{n+1}$, it follows that 
\[
m_{n,s}+dq_n+Bq_n
\ge
\left(\left\lfloor\frac{s-1}{2d}\right\rfloor+B\right)q_{n+1}\, .
\]
Likewise, since $m_{n,t}\leq t q_{n+1}$ by Proposition~\ref{prop:gap2}(3), we also have
\[
m_{n,t}+dq_n\le (t+d)q_{n+1} \, .
\]
The desired inequality~\eqref{eq:desiredB} now follows from~\eqref{eq:choiceB}.
\end{proof}

\begin{claim}
Put $\varepsilon_2:=\frac{-1}{t+d}\frac{\log|\lambda_1|}{\log c_2}>0$.  
Then $|L(\Lambda_n)|< H(\Lambda_n)^{-\varepsilon_2} |\Theta_n|$ for sufficiently large~$n\in \mathbb N$.
    \label{eq:claim2B}
\end{claim}
\begin{proof}
We have $|\Theta_n| \geq |\lambda_1|^{m_{n,s}} |\lambda_d|^{dq_n}$ and, for sufficiently large $n$, $|L(\Lambda_n)| < |\lambda_1|^{m_{n,t}+dq_n}$.
Hence, in view of~\eqref{eq:HEIGHTB}, it suffices to show that 
\[|\lambda_1|^{m_{n,t}+dq_n} \leq c_2^{-\varepsilon_2(m_{n,t}+dq_n)} |\lambda_1|^{m_{n,s}}|\lambda_d|^{dq_n}\, .\]
Taking logarithms, dividing by $\log |\lambda_1|$, and applying the definitions of $\varepsilon_2$ and $A$, it suffices to show
\[
(m_{n,t}-m_{n,s}) \geq \frac{m_{n,t}+dq_n}{t+d} + Adq_n \, .
\]
Since $m_{n,t}\leq t q_{n+1}$ (by Proposition~\ref{prop:gap2}(3)) and $q_n \leq q_{n+1}$, it suffices to show that 
\[ (m_{n,t}-m_{n,s}) \geq (1+Ad) q_{n+1}  \, .
\]
But  this holds for all sufficiently large $n \in \mathbb N$ since  $t=s+2d(1+Ad)$ and so
iterating Proposition~\ref{prop:gap2}(2) $1+Ad$ times gives the required gap.
\end{proof}

Claims~\ref{claim:oneB} and~\ref{eq:claim2B} show that, setting
$\varepsilon:=\min(\varepsilon_1,\varepsilon_2)/2$,
the tuple \(\Lambda_n\) and the linear form \(L\) satisfy the hypotheses of
Lemma~\ref{lem:s-unit-plus} for infinitely many \(n\).  As in the proof of
Theorem~\ref{thm:main}, we reorder the \(S\)-unit coordinates of \(\Lambda_n\),
leaving \(\nu_n\) fixed, so that \(\Theta_n\) plays the role of the distinguished
\(S\)-unit variable \(x_d\) in Lemma~\ref{lem:s-unit-plus}.  

It follows from Lemma~\ref{lem:s-unit-plus} that there are two distinct
\(S\)-unit coordinates \(X_n,Y_n\) of \(\Lambda_n\) and 
\(u \in \mathbb K\) such that ${X_n}/{Y_n}=u$
for infinitely many \(n\).  
Every \(S\)-unit coordinate of
\(\Lambda_n\) either has the form $\gamma^{q_n}$ or $\lambda_i^{m_{n,j}}\gamma^{q_n}$,
where \(1\leq j\leq t\), \(1\leq i\leq d\), and \(\gamma\) is a monomial in
\(\lambda_1,\ldots,\lambda_d\).  In the second case, namely for coordinates
coming from the exceptional terms \(w_{n,m_{n,j}}\), the monomial \(\gamma\)
has total degree \(d\).

We now distinguish three cases according to the form of \(X_n,Y_n\).  The first case is that
$X_n=\gamma_1^{q_n}$ and $Y_n=\gamma_2^{q_n}$
for distinct monomials \(\gamma_1,\gamma_2\) in $\lambda_1,\ldots,\lambda_d$.  
But $\gamma_1/\gamma_2$ is not a root of unity, by multiplicative independence of $\lambda_1,\ldots,\lambda_d$, and hence we cannot have
$X_n/Y_n=u$ for infinitely many $n$.  
 
The second case is (up to symmetry, by interchanging $X_n$ and $Y_n$ if necessary) 
that $X_n=\lambda_{i_1}^{m_{n,j_1}}\gamma_1^{q_n}$ and $Y_n=\gamma_2^{q_n}$ 
for fixed monomials $\gamma_1,\gamma_2$ in $\lambda_1,\ldots,\lambda_d$.  
Here, by Proposition~\ref{prop:not-cancel}(1), the equation $X_n/Y_n=u$ has only finitely many solutions $n$.

The third case is that
\(X_n=\lambda_{i_1}^{m_{n,j_1}}\gamma_1^{q_n}\) and \(Y_n=\lambda_{i_2}^{m_{n,j_2}}\gamma_2^{q_n}\)
for fixed monomials $\gamma_1,\gamma_2$ in $\lambda_1,\ldots,\lambda_d$ of total degree $d$.
If $(i_1,\gamma_1)\neq (i_2,\gamma_2)$
then Proposition~\ref{prop:not-cancel}(2) shows that the equation $X_n/Y_n=u$ has only finitely many solutions $n$.
If $(i_1,\gamma_1)= (i_2,\gamma_2)$ then $X_n/Y_n = \lambda_{i_1}^{m_{n,j_1} - m_{n,j_2}}$, where, by distinctness of $X_n$ and $Y_n$, we have $j_1\neq j_2$.  By
Proposition~\ref{prop:gap2}(1) it follows that $|m_{n,j_1} - m_{n,j_2}|\rightarrow\infty$ as $n\rightarrow\infty$. Since $\lambda_{i_1}$ is not a root of unity, this implies that $X_n/Y_n=u$ has finitely many solutions $n$.

In all three cases we obtain a contradiction with the assertion that $X_n/Y_n=u$ has infinitely many solutions $n\in \mathbb N$.
We conclude that $\alpha$ is not algebraic.  This completes the proof of Theorem~\ref{thm:extended}.
\bibliographystyle{plainurl}
\bibliography{bibtex}
\end{document}